\newtheorem{TEOR}{Theorem}[section]
\newtheorem{PROP}[TEOR]{Proposition}
\newtheorem{CORO}[TEOR]{Corollary}
\newtheorem{DEFI}[TEOR]{Definition}
\newtheorem{EJEM}[TEOR]{Example}
\newtheorem{REM}[TEOR]{Remark}
\newcommand{\dom}{\operatorname{dom}}
\newcommand{\NWD}{\operatorname{NWD}}
\newcommand{\KK}{\operatorname{\mathcal{K}}}
\newcommand{\Fin}{\operatorname{Fin}}
\def\N{\mathbb{ N}}
\def\rto{\rightarrow}
\def\FF{\mathcal{F}}
\def\BB{\mathcal{B}}
\def\AA{\mathcal{A}}
\def\II{\mathcal{I}}
\def\JJ{\mathcal{J}}
\def\diam{\text{diam}}
\title{\bf Baire theorem for ideals of sets }
\author{A. Avil\'es}
\address{Departamento de Matem\'{a}ticas, Universidad de Murcia,
30100 Espinardo (Murcia), Spain} \email{avileslo@um.es}
\author{V. Kadets}
\address{Department of Mathematics and Informatics, Kharkiv V.N.Karazin National University,
pl. Svobody 4, 61022 Kharkiv, Ukraine} \email{v.kateds@karazin.ua}
\author{A. P\'erez}
\address{Departamento de Matem\'{a}ticas, Universidad de Murcia,
30100 Espinardo (Murcia), Spain} \email{antonio.perez7@um.es}
\author{S. Solecki}
\address{Department of Mathematics, University of Illinois at Urbana-Champaign}
\email{ssolecki@math.uiuc.edu}
\thanks{The first author was supported by MINECO and FEDER (MTM2014-54182) and by Fundaci\'{o}n S\'{e}neca - Regi\'{o}n de Murcia (19275/PI/14). The research of the second author partially was done during his stay in Murcia under the support of MEC and FEDER projects MTM2008-05396 and MTM2011-25377. The third author was partially supported by the  
MINECO/FEDER project  {MTM2014-57838-C2-1-P} and a PhD fellowship of ``La Caixa Foundation''. The fourth author was supported by NSF grant DMS-1266189.
}
\subjclass[2010]{54E52; 28A05; 54H05; 06A07}
\keywords{Baire theorem; ideal of sets; nowhere dense; analytic set}
\begin{document}
\maketitle

\begin{abstract}
We study ideals $\II$ on $\N$ satisfying the following Baire-type property: \emph{if $X$ is a complete metric space and $\{X_{A} \colon A \in \II \}$ is a family of nowhere dense subsets of $X$ with $X_{A} \subset X_{B}$ whenever $A \subset B$, then $\bigcup_{A \in \II}{X_{A}} \neq X$}. We give several characterizations and determine the ideals having this property among certain classes like analytic ideals and P-ideals. We also discuss similar covering properties when considering  families of compact and meager subsets of $X$.
\end{abstract}

\section{Introduction}
For a given set $X$ we denote as usual by $\mathcal{P}(X)$ the collection of all subsets of $X$. We call a set $\II \subset \mathcal{P}(\N)$ an \emph{ideal} if $\N \notin \II$ and given $A, B \in \II$ we have that $\mathcal{P}(A) \subset \II$ and $A \cup B \in \II$. A set $\beta \subset \II$ is a basis of $\II$ if every $A \in \II$ is contained in some $B \in \beta$. The \emph{character} of $\II$ is the minimal cardinality of a basis of $\II$. Along this paper, every considered ideal $\II$ is supposed to contain the ideal $\Fin$ of all finite subsets of $\N$.  

As a dual concept, a set $\FF \subset \mathcal{P}(\N)$ is a \emph{filter} on $\N$ if $\{ \N \setminus A\colon A \in \FF \}$ is an ideal on $\N$, and $\beta \subset \FF$ is called a basis of $\FF$ if every $A \in \FF$ contains some $B \in \beta$. If $(x_{n})_{n \in \N}$ is a sequence in a topological space $X$, then it is said to be \emph{$\mathcal{F}$-convergent} to $a \in X$, usually written $a =\lim_{n, \FF}{x_{n}}$, if for every neighbourhood $V$ of $a$ we have that $\{ n \in \N\colon x_{n} \in V \}$ belongs to $\FF$.

Let $\FF$ be a filter on $\N$ and let $E$ be an arbitrary Banach space. A sequence $(e_{n})_{n \in \N}$ in $E$ is said to be an \emph{$\FF$-basis} of $E$ if for every $x \in E$ there exists a unique sequence of scalars $(a_{n})_{n \in \N}$ such that
\[ x = \lim_{n, \FF}{\sum_{i=1}^{n}{a_{i} e_{i}}}. \]
This definition extends the notion of Schauder basis, which corresponds to the case $\FF_{cf} := \{ A \subset \N \colon \N \setminus A \in \Fin \}$, known as the Fr\'{e}chet filter. The concept of $\FF$-basis was introduced in \cite{Gan-kad}, but previously considered in \cite{con-gan-kad} for the filter of statistical convergence 
\[ \FF_{st}:= \left\{ A \subset \N\colon \lim_{n}{\frac{|A \cap \{ 1, \ldots, n\}|}{n}}=1 \right\}. \] 
It is clear from the definition of $\FF$-basis that the coefficient maps $e_{n}^{\ast}(x) = a_{n}$ are linear on $E$. However, and in contrast with Schauder bases, it is not known whether the $e_{n}^{\ast}$'s are necessarily continuous. A partial result was given by T. Kochanek \cite{kochanekFbases}, who showed that if $\FF$ has character less than $\mathfrak{p}$ then the answer is positive. Here $\mathfrak{p}$ denotes the \emph{pseudointersection number}, defined as the minimum of the cardinals $\kappa$ for which the following claim is true: \emph{if $\AA$ is a family of subsets of $\N$ with cardinality less than $\kappa$ and satisfying that $\bigcap{\AA_{0}}$ is infinite for each finite subset $\AA_{0} \subset \AA$, then there is an infinite set $B \subset \N$ such that $B \setminus A$ is finite for every $A \in \AA$}.

If we work with the dual ideal $\II$ associated to $\FF$, a review of Kochanek's argument shows that the key step to get the result is that $\II$ has the next property:
\begin{enumerate}
\item[($\Box$)]\label{propertyBox} If $X$ is a complete metric space and $\{ X_{A}\colon A \in \II\}$ is a set of meager subsets of $X$ with $X_{A} \subset X_{B}$ whenever $A \subset B$, then $ \bigcup{\{ X_{A}\colon A \in \II \}} \neq X$. 
\end{enumerate}
Unfortunately not every ideal has this property. If the character of $\II$ is less than $\mathfrak{p}$, then it has property ($\Box$), since $\bigcup{\{ X_{A}: A \in \II \}}$ is equal to $\bigcup{\{ X_{B}: B \in \beta \}}$ which is the union of less than $\mathfrak{p}$ meager subsets, and this is again a meager subset of $X$ by \cite[Corollary 22C]{fremlinMartin}. In section \ref{sec:baireIdeals}, we show that the converse is also true under the set-theoretical assumption $\mathfrak{p} = \mathfrak{c}$.

The aim of this paper is to study what happens if we replace the condition ``meager'' by ``nowhere dense'' in ($\Box$). Ideals satisfying this last property will be called \emph{Baire ideals}. In section \ref{sec:characterizations} we prove several characterizations of this type of ideals. We also show that in order to demonstrate that an ideal $\II$ is a Baire ideal, we just have to check property ($\Box$) (with nowhere dense subsets instead of meager ones) for the metrizable space $X = D^{\N}$, $D$  being the discrete space of cardinality equal to $\mathfrak{c}$.

The fourth section is devoted to determine which are the Baire ideals in the classes of analytic ideals and P-ideals. Here we work in ZFC without any other set-theoretical assumptions. We show that in both cases the only Baire ideals are the countably generated ideals. In contrast with this, we construct in section \ref{sec:example} a model of ZFC in which we can find an uncountably generated P-ideal $\II$ satisfying property ($\Box$) for the particular case $X = 2^{\N}$. We also study the case of ideals generated by an almost disjoint family of subsets of $\N$.

In the last section, we show that if in ($\Box$) one considers compact subsets instead of meager ones, then there are uncountably generated $F_{\sigma}$ ideals satisfying that property for $X = \N^{\N}$.

Our notation and terminology is standard and it is either
explained when needed or can be found in \cite{kechris-descriptive} and \cite{kelleyTopology}.

\section{Baire ideals}
\label{sec:baireIdeals}

As we announced in the introduction, if we assume that $\mathfrak{p} = \mathfrak{c}$ (for instance, under Martin's Axiom), property ($\Box$) depends exclusively on the character of the ideal $\II$, as the following proposition shows.

\begin{PROP}\label{PROP:charactDiamond}
If $\II$ has a basis of cardinality less than $\mathfrak{p}$ then it has property ($\Box$). If $\mathfrak{p} = \mathfrak{c}$, then the converse is true.
\end{PROP}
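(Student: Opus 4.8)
The plan is to prove the two implications separately, treating the first as essentially the computation already indicated in the introduction and concentrating on the converse.

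For the forward implication, suppose $\II$ has a basis $\beta$ with $|\beta| < \mathfrak{p}$, and let $\{X_A : A \in \II\}$ be a monotone family of meager subsets of a complete metric space $X$. First I would observe that monotonicity together with the definition of a basis gives $\bigcup_{A \in \II} X_A = \bigcup_{B \in \beta} X_B$: indeed every $A \in \II$ lies below some $B \in \beta$, whence $X_A \subseteq X_B$. The right-hand side is a union of fewer than $\mathfrak{p}$ meager sets, hence meager by \cite[Corollary 22C]{fremlinMartin}, and therefore a proper subset of $X$ by the Baire category theorem. This is exactly property ($\Box$).

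For the converse, assume $\mathfrak{p} = \mathfrak{c}$ and that $\II$ has no basis of cardinality $< \mathfrak{c}$; I would argue the contrapositive, producing a single complete metric space together with a monotone family of meager sets covering it. The first step is to reinterpret the character of $\II$ as the cofinality of the directed poset $(\II, \subseteq)$, directedness coming from closure under finite unions; the hypothesis then says that this cofinality equals $\mathfrak{c}$, a regular cardinal since $\mathfrak{p}$ is regular. The key step is to build an order-preserving cofinal map $\phi : (\II, \subseteq) \to (\mathfrak{c}, \le)$. Fixing a cofinal family $\{c_\xi : \xi < \mathfrak{c}\} \subseteq \II$, I would set $\phi(A) = \min\{\xi : A \subseteq c_\xi\}$; monotonicity is immediate because enlarging $A$ only shrinks the set of admissible indices, and cofinality of the range follows from regularity of $\mathfrak{c}$, since a bounded range would yield a cofinal subfamily $\{c_\xi : \xi \le \eta\}$ of size $< \mathfrak{c}$, contradicting that the cofinality is $\mathfrak{c}$.

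The second ingredient is an increasing $\mathfrak{c}$-chain of meager sets exhausting a fixed complete metric space. I would take $X = 2^{\N}$, enumerate $2^{\N} = \{x_\xi : \xi < \mathfrak{c}\}$, and set $M_\xi = \{x_\eta : \eta \le \xi\}$; each $M_\xi$ has cardinality $< \mathfrak{c}$ and is therefore meager, as $\mathfrak{p} = \mathfrak{c}$ forces any union of fewer than $\mathfrak{c}$ meager sets (here, singletons) to be meager by \cite[Corollary 22C]{fremlinMartin}. Finally I would compose, defining $X_A = M_{\phi(A)}$: monotonicity of $\{X_A\}$ is inherited from $\phi$ and from the chain $(M_\xi)_\xi$, each $X_A$ is meager, and $\bigcup_{A \in \II} X_A = 2^{\N}$ precisely because $\phi$ is cofinal. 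This family witnesses the failure of ($\Box$). The main obstacle is the combinatorial core of the converse, namely obtaining a monotone cofinal map into $\mathfrak{c}$ and matching it against a meager exhaustion of $2^{\N}$; the regularity of $\mathfrak{p} = \mathfrak{c}$ and the additivity consequence $\mathfrak{p} \le \operatorname{add}(\mathcal{M})$ are exactly what make the two lengths align.
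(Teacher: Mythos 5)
Your proof is correct and follows essentially the same route as the paper's: in both, the converse is proved by composing a monotone map from $\II$ onto an unbounded subset of $\mathfrak{p}=\mathfrak{c}$ with the initial segments of a length-$\mathfrak{c}$ enumeration of a Polish space, the resulting sets being meager because unions of fewer than $\mathfrak{p}$ meager sets are meager \cite[Corollary 22C]{fremlinMartin}. The only (cosmetic) differences are that the paper works in $\mathbb{R}$ rather than $2^{\N}$ and secures unboundedness by first building a special basis $\{B_{\alpha}\colon \alpha\in\mathfrak{p}\}$ with $B_{\gamma}$ outside the ideal generated by its predecessors, whereas you take an arbitrary cofinal family and get unboundedness of the range of $\phi$ by the contradiction that a bounded range would yield a basis of cardinality less than $\mathfrak{c}$.
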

\begin{proof}
The first part was discussed in the introduction so we just prove the second statement. Suppose that $\II$ is an ideal without a basis of cardinality less than $\mathfrak{p}$. Then we can easily construct a basis $\{ B_{ \alpha}\colon \alpha \in \mathfrak{p} \}$ with the property that $B_{\gamma}$ does not belong to the ideal generated by $\{ B_{\alpha}\colon \alpha < \gamma \}$. Consider the map $g(\cdot)\colon \II \rightarrow \mathfrak{p}$ that associates to each $A \in \II$ the minimum of the ordinals $\gamma$ such that $A$ belongs to the ideal generated by $\{ B_{\alpha}\colon \alpha < \gamma \}$. 
Now we can enumerate the elements of $\mathbb{R}$ as $\{ x_{\alpha}\colon \alpha \in \mathfrak{p}\}$ and define for each $A \in \II$ the set $\mathbb{R}_{A}:= \{ x_{\alpha}\colon \alpha \leq g(A)\}$. This is a meager set (union of less than $\mathfrak{p}$ meager sets is meager by \cite[Corollary 22C]{fremlinMartin}), and obviously $\mathbb{R}_{A} \subset \mathbb{R}_{B}$ whenever $A \subset B$. But it is also clear that $\bigcup{\{ \mathbb{R}_{A}\colon A \in \II \}} = \mathbb{R}$ since $x_{\alpha} \in \mathbb{R}_{B_{\alpha}}$ for every $\alpha < \mathfrak{p}$.  
\end{proof}

It is natural to ask whether the situation is the same if we restrict to nowhere dense subsets instead of meager ones. Given a topological space $X$, we will denote by $\NWD(X)$ the family of all nowhere dense subsets of $X$ ordered with inclusion.\label{DEFI:familiesOfSubsets}

\begin{DEFI}
\label{DEFI:baireIdeal}
Let $\II$ be an ideal on $\N$ and $X$ a topological space. We call $\II$ a \emph{Baire ideal for $X$} if for every monotonic function $f\colon \II \rightarrow \NWD(X)$ we have that
\[ \bigcup{\{ f(A)\colon A \in \II \}} \neq X. \]
If $\II$ has this property for every complete metric space $X$ then we simply say that $\II$ is a \emph{Baire ideal}.
\end{DEFI}

It follows from Proposition \ref{PROP:charactDiamond} that every ideal $\II$ with character less than $\mathfrak{p}$ is a Baire ideal. However, it is not clear if the converse is true, even under Martin's Axiom. It is interesting to remark that if we put less restrictions on $X$ then we can give a full characterization.

\begin{PROP}\label{PROP:II-cat spaces}
Let $\II$ be an ideal on $\N$. Then, $\II$ is countably generated if and only if it is a Baire ideal for every topological space $X$ of second category in itself.
\end{PROP}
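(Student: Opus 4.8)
The forward implication is the routine half. If $\II$ is countably generated, fix a basis $\{A_n : n\in\N\}$ which, after replacing $A_n$ by $A_1\cup\cdots\cup A_n$, we may assume increasing. Given a monotone $f\colon\II\to\NWD(X)$ and any $A\in\II$, there is $n$ with $A\subseteq A_n$, so $f(A)\subseteq f(A_n)$ and hence $\bigcup_{A\in\II}f(A)=\bigcup_{n\in\N}f(A_n)$ is a countable union of nowhere dense sets, i.e.\ a meager subset of $X$. As $X$ is of second category in itself it is not the union of countably many nowhere dense sets, so this union is not all of $X$; thus a countably generated ideal is a Baire ideal for every second category space.

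For the converse I argue by contraposition: assuming $\II$ is \emph{not} countably generated I produce a space $X$ of second category in itself and a monotone $f\colon\II\to\NWD(X)$ whose images cover $X$. The plan is to reduce this to a cone problem on the index set of a basis. Fix a basis $\{B_\alpha:\alpha<\kappa\}$, where $\kappa$ is the character; exactly as in the proof of Proposition \ref{PROP:charactDiamond} we may take it irredundant, so that $B_\alpha\subseteq B_\delta$ forces $\alpha\le\delta$. For $A\in\II$ set $D_A=\{\alpha<\kappa : B_\alpha\subseteq A\}$. Any topology on the set $\kappa$ in which each $D_A$ is closed and nowhere dense then yields, via $f(A)=D_A$, a monotone map into $\NWD(\kappa)$ whose images cover $\kappa$ (since $\alpha\in D_{B_\alpha}$). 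So the whole task becomes: topologize $\kappa$ so that it is of second category in itself while every $D_A$ is nowhere dense.

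The clean case is $\mathrm{cf}(\kappa)>\omega$; this already covers all P-ideals, whose character is easily seen to have uncountable cofinality (a countable cofinal subfamily would, by the countable directedness mod finite, produce a largest element). Equip $\kappa$ with the co-$(<\kappa)$ topology, whose nonempty open sets are the complements of sets of cardinality $<\kappa$. By irredundancy, if $A\subseteq B_\delta$ then $D_A\subseteq\{\alpha:\alpha\le\delta\}$ has size $\le|\delta|<\kappa$, so each $D_A$ is closed with empty interior, hence nowhere dense; and since $\mathrm{cf}(\kappa)>\omega$ a countable union of sets of size $<\kappa$ still has size $<\kappa$, making this space Baire, in particular of second category in itself. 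Then $f(A)=D_A$ is the desired cover (equivalently one may push the rank function $g(A)=\min\{\gamma:A\in\langle B_\alpha:\alpha<\gamma\rangle\}$ forward, putting $f(A)=\{\alpha:\alpha\le g(A)\}$).

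The genuine obstacle is the singular case $\mathrm{cf}(\kappa)=\omega$, which really occurs: the ideal generated by an almost disjoint family of size $\aleph_\omega$ has character $\aleph_\omega$. Here the co-$(<\kappa)$ topology breaks down, because writing $\kappa=\sup_n\kappa_n$ displays it as a countable union of the nowhere dense sets $\{\alpha<\kappa_n\}$ and thus destroys second category; symmetrically, any cover built from one increasing chain collapses along a cofinal $\omega$-sequence. The remedy I propose is to exploit that $\NWD(X)$ is closed under \emph{finite} unions. When the basis may be chosen to be an antichain — as for ideals generated by almost disjoint families — the cones $D_A$ are finite, so one simply uses the \emph{cofinite} topology on $\kappa$, which is of second category in itself precisely because $\kappa$ is uncountable, and $f(A)=D_A$ works irrespective of $\mathrm{cf}(\kappa)$. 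For a general singular ideal the idea is to split $\II$ along a cofinal tower of subideals of strictly smaller character (of uncountable cofinality) and assemble the corresponding co-$(<\kappa_n)$ spaces into a topological disjoint union, which is of second category as soon as one summand is. The hard part will be engineering this decomposition so that each $A\in\II$ meets only boundedly many layers: the chains living inside $\II$ make a single high cone $D_A$ meet a whole lower summand, and keeping every restriction $f_n(A)$ nowhere dense simultaneously in all summands, while the images still cover $X$, is the delicate technical heart of the argument.
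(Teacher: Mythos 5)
Your forward implication is correct and is exactly the paper's argument. The converse, however, is not a proof but a partial proof together with an acknowledged open problem: you settle the case where the character $\kappa$ has uncountable cofinality (via the co-$(<\kappa)$ topology) and the special case of ideals whose cones $D_A$ are finite (via the cofinite topology), but for a general ideal with $\mathrm{cf}(\kappa)=\omega$ you only propose a plan --- a tower of subideals of smaller character glued into a topological sum --- and you yourself flag its crucial step (keeping every $f(A)$ nowhere dense in \emph{every} summand while the images still cover) as ``the delicate technical heart'' and leave it unproved. Since the statement must hold for \emph{every} ideal that is not countably generated, this is a genuine gap, not a cosmetic one; and it is created entirely by your decision to use cardinality-defined topologies on the index set, which is what breaks at singular cardinals of countable cofinality.

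The missing idea is that no cardinal arithmetic is needed at all: make the cones themselves the basic closed sets. The paper takes $X=\II$ (your index-set version works equally well) and endows it with the topology generated by the basis of closed sets $\emptyset$ and $X_A=\{I\in\II\colon I\subset A\}$ for $A\in\II$. Then: (a) each $X_A$ is nowhere dense, because a nonempty basic open set $X\setminus X_B$ contained in $X_A$ would give $X=X_A\cup X_B$, i.e.\ $\II$ generated by the single set $A\cup B$; (b) every proper closed set, hence every nowhere dense set, is contained in a finite union $X_{A_1}\cup\dots\cup X_{A_k}\subset X_{A_1\cup\dots\cup A_k}$, so a countable cover of $X$ by nowhere dense sets would give $X=\bigcup_{n}X_{A_n}$ and thus a countable basis of $\II$; hence $X$ is of second category in itself precisely because $\II$ is not countably generated; and (c) $A\mapsto X_A$ is monotone with $\bigcup_{A\in\II}X_A=X$. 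In your notation this amounts to topologizing $\kappa$ by declaring the sets $D_A$ ($A\in\II$) to be basic closed sets instead of using the sets of size $<\kappa$. With that one change your cone reduction closes uniformly, the cofinality dichotomy disappears, and the separate treatment of AD families becomes unnecessary.
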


\begin{proof}
If $X$ is of second category in itself then it is not the union of a countable family of nowhere dense subsets. Hence, by the comments preceding this Proposition, if $\II$ is countably generated then it is a Baire ideal for $X$. 

To see the converse, suppose that $\II$ is not countably generated. We can endow $X=\II$ with the topology $\tau$ generated by the basis of closed sets consisting of $\emptyset$ and the sets
\[ X_{A} = \{ I \in \II\colon I \subset A \} \: (A \in \II). \]
Every nowhere dense subset of $(X, \tau)$ is contained in some $X_{A}$, and moreover every $X_{A}$ is nowhere dense since if a basic open $X \setminus X_{B}$ was contained in $X_{A}$, then we would have that $X = X_{A} \cup X_{B}$, contradicting that $\II$ is not countably generated. The same fact shows that $X \neq \bigcup_{n \in \N}{X_{A_{n}}}$ for every sequence $(A_{n})_{n \in \N}$ in $X$, so $X$ is not the union of a countable family of nowhere dense subsets and hence it is of second category in itself. Finally it is clear that $\bigcup{\{ X_{A}\colon A \in \II \}} = X$ and so $\II$ is not a Baire ideal for $X$.
\end{proof}

\section{Characterizations of Baire ideals}
\label{sec:characterizations}

Recall that the \emph{weight} $w(X)$ of a topological space is the minimum cardinal $\kappa$ for which $X$ has basis of open sets with such cardinality. In the current section, we will show that in order to prove that $\II$ is a Baire ideal for every complete metric space $X$ of weight $w(X) \leq \kappa$ we just have to check that it is a Baire ideal for $D^{\N}$ where $D$ is a discrete space of cardinality $\kappa$. Moreover, we give another characterization of this fact in terms of the properties of certain subtrees of $\mathcal{P}(\II)$.

If $D$ is a discrete space then $D^{ \N}$ with the product topology is completely metrizable. We introduce some notation: If $\sigma\colon \N \rto D$ is an element of $D^{\N}$ then we put $\sigma|_{0} := \emptyset$ and $\sigma|_{k} = (\sigma(1),...,\sigma(k))$ if $k > 0$. We denote by $D^{< \N}$ the set of finite sequences of elements in $D$. In other words, $t \in D^{<\N}$ if there is $\sigma \in D^{\N}$ and $k \geq 0$ such that $\sigma|_{k} = t$. In this case we will write $t \preccurlyeq \sigma$. If $s \in D^{< \N}$ we write $s \preccurlyeq t$ if $t$ is an extension of $s$; i.e. if there are $\sigma \in D^{\N}$ and $0 \leq p \leq q$ with $s=\sigma|_{p}, t = \sigma|_{q}$. If $\alpha \in D$ then we denote by $s \textasciicircum \alpha := (s(1),...,s(k),\alpha)$ the extended sequence obtained by adding the element $\alpha$ at the end of $s$. With this notation, a basis of open subsets in $D^{\N}$ is given by the sets
\[ U_{s} = \{ \sigma \in D^{\N}\colon \sigma \succcurlyeq s \} \hspace{5mm} (s \in D^{< \N}).\]
Notice that each $U_{s}$ is clopen. Recall that $\AA \subset \mathcal{P}(\N)$ is called \emph{hereditary} if $B \subset A \in \AA$ implies that $B \in \AA$.

\begin{TEOR}
\label{TEOR:treeWeight}
Let $D$ be a discrete space of (infinite) cardinality $\kappa$ and $\II$ an ideal on $\N$. The following assertions are equivalent:
\begin{enumerate}
\item[$(1)$] $\II$ is a Baire ideal for every complete metric space $X$ with $w(X) \leq \kappa$.
\item[$(2)$] $\II$ is $D^{\N}$-Baire.
\item[$(3)$] Let $f\colon D^{< \N} \rto \mathcal{P}(\II)$ be monotonic and such that for every $s \in D^{<\N}$: 

\noindent (i) $f(s)$ is hereditary, (ii) $\II = \bigcup_{t \succcurlyeq s}{f(t)}$.

\noindent Then, there exists $\sigma \in D^{\N}$ such that $\bigcup_{k \in \N}{f(\sigma|_{k})} = \II$. 

\item[$(4)$] Let $f\colon D^{< \N} \rto \mathcal{P}(\II)$ be monotonic and such that for every $s \in D^{<\N}$: 

\noindent (i) $f(s)$ is hereditary, (ii') $\II=\bigcup_{d \in D}f(s \textasciicircum d)$, (iii) $f(s) = \bigcap_{d \in D}{f(s \textasciicircum d)}$. 

\noindent Then, there exists $\sigma \in D^{\N}$ such that $\bigcup_{k \in \N}{f(\sigma|_{k})} = \II$.
\end{enumerate}
\end{TEOR}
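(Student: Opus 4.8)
The plan is to prove the cyclic chain of implications $(1) \Rightarrow (2) \Rightarrow (3) \Rightarrow (4) \Rightarrow (1)$, though I should first examine which directions are genuine content and which are routine. The implication $(1) \Rightarrow (2)$ is immediate since $D^{\N}$ is itself a complete metric space of weight $\leq \kappa$. The implication $(4) \Rightarrow (3)$ looks like a reduction: condition $(4)$ has the extra hypothesis $(iii)$ that $f(s)$ is the \emph{intersection} over one-step extensions, so given an $f$ as in $(3)$ I would replace it by its ``intersection closure'' $\tilde f(s) := \bigcap_{t \succcurlyeq s} f(t)$, or build a modified tree map satisfying $(iii)$ while preserving the conclusion, and then invoke $(4)$. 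Let me think about whether that preserves $(ii)$, which is the delicate point.

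\medskip

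\noindent\textbf{The heart of the argument is translating between nowhere dense covers of $D^{\N}$ and tree maps on $D^{<\N}$.} For $(2) \Rightarrow (3)$ (or directly $(2) \Rightarrow (4)$), given a monotonic $f \colon D^{<\N} \to \mathcal{P}(\II)$ with the stated properties, I would build a monotonic nowhere-dense-valued cover $g \colon \II \to \NWD(D^{\N})$ by setting, for each $A \in \II$,
\[
g(A) = \{ \sigma \in D^{\N} \colon A \notin f(\sigma|_k) \text{ for all } k \}.
\]
Because $f(s)$ is hereditary and monotonic, $g$ is monotonic in $A$. The key computation is that $g(A)$ is closed (a $\sigma$ leaves $g(A)$ as soon as some initial segment captures $A$, an open condition) and nowhere dense: given any basic open $U_s$, hypothesis $(ii)$ says $\II = \bigcup_{t \succcurlyeq s} f(t)$, so there is $t \succcurlyeq s$ with $A \in f(t)$, whence $U_t \subset U_s$ misses $g(A)$. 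Then $\II$ being $D^{\N}$-Baire gives $\sigma \notin \bigcup_A g(A)$, which unwinds exactly to $\bigcup_k f(\sigma|_k) = \II$. For the converse $(3) \Rightarrow (1)$, given a complete metric space $X$ of weight $\leq \kappa$ and a monotonic $h \colon \II \to \NWD(X)$, I would fix a continuous open surjection (or a suitable Lusin-type scheme) from a closed subspace of $D^{\N}$ onto $X$, or more directly build a tree of shrinking open balls indexed by $D^{<\N}$, and set $f(s) = \{ A \in \II \colon \overline{B_s} \cap h(A) = \emptyset \}$ where $B_s$ is the ball attached to $s$; refining so that $\operatorname{diam} B_s \to 0$ lets the unique point $x$ determined by a branch $\sigma$ escape every $h(A)$.

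\medskip

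\noindent\textbf{I expect the main obstacle to be the bookkeeping in $(3) \Rightarrow (1)$}, namely arranging a tree of open sets in an arbitrary complete metric space $X$ of weight $\kappa$ so that (a) branches of $D^{\N}$ correspond to points of $X$ via shrinking diameters, (b) the hereditariness and covering conditions $(i)$ and $(ii)$ on the induced $f$ hold, and (c) nowhere-denseness of each $h(A)$ is exactly what supplies, for every $s$, an extension $t \succcurlyeq s$ avoiding $h(A)$, giving $(ii)$. The subtlety is that $w(X) \leq \kappa$ must be used to index the branching by $D$ of size $\kappa$, and I must ensure every point of $X$ is reached by some branch so that the conclusion $\bigcup_k f(\sigma|_k) = \II$ genuinely forces $x \notin \bigcup_A h(A)$. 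Completeness of $X$ guarantees the nested closed balls along a branch have nonempty intersection; choosing the scheme so that the intersection is a single point, and so that the map from branches to points is onto, is where the real care lies. The equivalence with $(4)$ I expect to be comparatively soft, obtained by the intersection-closure trick above once the main correspondence is in place.
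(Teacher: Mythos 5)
Your two substantive constructions are exactly the paper's: for $(2)\Rightarrow(3)$ the map $A \mapsto \{\sigma \in D^{\N}\colon A \notin f(\sigma|_{k}) \text{ for all } k\}$, and for returning to $(1)$ the tree of shrinking basic open sets $V_{s}$ with the induced $f(s) = \{A \in \II\colon \overline{h(A)} \cap V_{s} = \emptyset\}$; both are sound as sketched. The genuine gap is in how you connect statement $(4)$ to the rest. Your announced cycle requires the routine $(3)\Rightarrow(4)$ (any $f$ satisfying (i), (ii'), (iii) satisfies (i), (ii)) together with the hard implication $(4)\Rightarrow(1)$; instead you prove $(3)\Rightarrow(1)$ and propose to handle $(4)$ by an ``intersection closure'' reduction giving $(4)\Rightarrow(3)$. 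But that reduction is vacuous: since $f$ is monotonic and $s \preccurlyeq s$, one has $\tilde f(s) = \bigcap_{t \succcurlyeq s} f(t) = f(s)$, so nothing is gained; and the variant over strict extensions need not satisfy (ii'), because membership in $\tilde f(s \textasciicircum d)$ would require $A \in f(t)$ for \emph{all} extensions $t \succ s \textasciicircum d$, which hypothesis (ii) does not provide. You flagged exactly this worry (``whether that preserves (ii)'') but never resolved it, and it is not resolvable by this device. The net effect of your plan is $(1)\Leftrightarrow(2)\Leftrightarrow(3)$ and $(3)\Rightarrow(4)$, with no implication from $(4)$ back to the others, so the four-way equivalence is not established.

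The repair is what the paper does, and it costs only one extra verification inside your shrinking-balls construction: because the children of $V_{s}$ are \emph{all} basic sets $W_{d}$ with $\overline{W_{d}} \subset V_{s}$ and $\diam(W_{d}) \leq \diam(V_{s})/2$, every point of $V_{s}$ is of the form $p_{\sigma}$ for some branch $\sigma \succcurlyeq s$, i.e.\ $V_{s} = \{p_{\sigma}\colon \sigma \succcurlyeq s\}$. Hence if $A \notin f(s)$, the point witnessing $\overline{h(A)} \cap V_{s} \neq \emptyset$ lies in some child $V_{s \textasciicircum d}$, which yields $\bigcap_{d \in D} f(s \textasciicircum d) \subseteq f(s)$, i.e.\ condition (iii); condition (ii') holds by the argument you already gave for (ii). So the induced $f$ satisfies the hypotheses of the \emph{weaker} statement $(4)$, and your construction, unchanged, proves $(4)\Rightarrow(1)$, closing the cycle $(1)\Rightarrow(2)\Rightarrow(3)\Rightarrow(4)\Rightarrow(1)$. (Two side remarks: a correct direct proof of $(4)\Rightarrow(3)$ is possible, but it needs a monotone re-indexing map collapsing all extensions of a node onto its one-step extensions, in the spirit of the paper's passage from $2^{<\N}$ to $\N_{0}^{<\N}$ in the Polish-space corollary --- it is not ``soft''. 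Also, for the direction invoking only $(3)$, the branch-surjectivity you worried about is not needed: completeness plus shrinking diameters already produce the single point $p_{\sigma}$ escaping every $h(A)$. Where surjectivity of branches onto points is genuinely needed is precisely in verifying (iii).)
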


\begin{proof}
Implications $(1)\Rightarrow (2)$ and $(3) \Rightarrow (4)$ are obvious.

\noindent $(2) \Rightarrow (3)$: Suppose that we have a function $f\colon D^{< \N} \rto \mathcal{P}(\II)$ as in $(3)$ and define the map $F\colon \II \rto \NWD(D^{\N})$ that assigns to each $A \in \II$ the set
\[ F(A) = \{ \sigma \in D^{\N}\colon A \notin f(\sigma|_{k}) \text{ for every $k \in \N$} \}. \]
To see that $F(A)$ is effectively nowhere dense, notice that if $U_{s}$ is a basic open then there exists by (ii) an element $t \succcurlyeq s$ with $A \in f(t)$, which means that $U_{t} \subset U_{s} \setminus F(A)$. Since we are assuming that $\II$ is a Baire ideal for $D^{\N}$, we can find $\sigma \in D^{\N} \setminus \bigcup{\{ F(A)\colon A \in \II \}}$.  But $\sigma \notin F(A)$ means that $A \in \bigcup_{k \in \N}{f(\sigma|_{k})}$ for every $A \in \II$. 

\noindent $(4)\Rightarrow(1)$: Let $\{ W_{d}\colon d \in D \}$ be a basis of open sets in $X$ (repeating elements if necessary). We are going to construct a collection $\{ V_{s}\colon s \in D^{< \N}\}$ of open sets in the following inductive way: $V_{\emptyset} = X$ and $V_{(d)}:=W_{d}$ for each $d \in D$. Suppose that we have constructed $V_{s}$ ($s \in D^{< \N}$). The elements $V_{s \textasciicircum x}$ $(x \in D)$ are going to be all the open sets $W_{d}$ such that $\overline{W_{d}} \subset V_{s}$ and $\diam(W_{d}) \leq \diam(V_{s})/2$ repeating elements if necessary (here $\diam{(C)}$ means the diameter of the set $C$). Observe that for every $\sigma \in D^\N$, the branch $\{ V_{\sigma|_{k}} \colon k \in \N \}$ has non-empty intersection which consists of a unique element that we will denote by $p_{\sigma}$. Of course different branches could determine the same point, and it is also clear that every element of $X$ belongs to the intersection of one of these branches. Moreover, each open set $V_{s}$ can be described now as $V_{s} = \{ p_{\sigma}\colon \sigma \succcurlyeq s \}$.

\noindent Given $F\colon \II \rto \NWD(X)$ monotonic, define a new map $f\colon D^{< \N} \rto \mathcal{P}(\II)$ as
\[ f(s) = \{ A \in \II\colon \overline{F(A)} \cap V_{s} = \emptyset \} \mbox{ for each $s \in D^{< \N}$}. \]
It is also monotonic and satisfies condition (i). To see (ii'), fix $s \in D^{< \N}$ and take an arbitrary $A \in \II$. Since $F(A)$ is nowhere dense, there exists an open set $W_{d}$ such that $\overline{W_{d}} \subset V_{s} \setminus F(A)$ and $\diam(W_{d}) \leq \diam(V_{s})/2$. We know that this $W_{d}$ is equal to some element $V_{s \textasciicircum x}$, so $A \in f(s \textasciicircum x)$. We check now condition (iii) on $f$. If $s \in D^{< \N}$ and $A \notin f(s)$ then $F(A) \cap V_{s} \neq \emptyset$, so there exists an element $p_{\sigma}$ ($\sigma \succcurlyeq s$) that belongs to this intersection. If $\sigma|_{k} = s$, then notice that $V_{\sigma|_{k+1}} = V_{s \textasciicircum \sigma(k+1)}$ also has non-empty intersection with $F(A)$, so $A \notin f(\sigma|_{k+1})$. This shows that $f(s) \supset \bigcap_{d \in D}{f(s \textasciicircum d)}$, but this is indeed an equality by monotonicity.

Using the assumption, there is $\sigma \in D^{\N}$ such that every $A \in \II$ belongs to $f(\sigma|_{k})$ for some $k \in \N$, and so $F(A) \cap U_{\sigma|_{k}} = \emptyset$. This implies that $p_{\sigma} \notin F(A)$ for every $A \in \II$.
\end{proof}

In the case of separable complete metric spaces (\emph{Polish spaces}) we can give (apparently) simpler equivalent conditions than those of Theorem \ref{TEOR:treeWeight}.

 \begin{CORO}
 \label{CORO:baireCompact}
Let $\II$ be an ideal on $\N$. The following assertions are equivalent:
 \begin{enumerate}
 \item[$(1)$] $\II$ is a Baire ideal for every Polish space.
 \item[$(2')$] $\II$ is a Baire ideal for $2^{\N}$.
 \item[$(3')$] let $f\colon 2^{< \N} \rto \mathcal{P}(\II)$ be monotonic and such that for every $s \in 2^{< \N}$:
 
\noindent (i) $f(s)$ is hereditary, (ii) $\bigcup_{t \succcurlyeq s}{f(t)} = \II$.

\noindent Then, there is $\sigma \in 2^{\N}$ such that $\bigcup_{k \in \N}{f(\sigma|_{k})} = \II$.  
 \end{enumerate}
 \end{CORO}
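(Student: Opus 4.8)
The goal is to deduce Corollary \ref{CORO:baireCompact} from Theorem \ref{TEOR:treeWeight}. The plan is to recognize that this is essentially the special case $\kappa = \aleph_0$ of the theorem, with the discrete space $D = 2 = \{0,1\}$ playing the role of the cardinality-$\kappa$ discrete space, and with Polish spaces being exactly the complete metric spaces of weight at most $\aleph_0$. So most implications should transfer almost verbatim; the genuine work lies in reconciling the two different model spaces $2^{\N}$ (used here) and $D^{\N}$ with $|D| = \aleph_0$ (used in the theorem), and in showing that the seemingly weaker condition $(3')$ over the binary tree suffices.

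First I would dispose of the easy implications. The implication $(1) \Rightarrow (2')$ is immediate since $2^{\N}$ is Polish. For $(2') \Rightarrow (3')$, I would run exactly the argument of $(2)\Rightarrow(3)$ from Theorem \ref{TEOR:treeWeight}, replacing $D^{<\N}$ by $2^{<\N}$ and $D^{\N}$ by $2^{\N}$: given $f$ as in $(3')$, define $F(A) = \{\sigma \in 2^{\N} : A \notin f(\sigma|_k) \text{ for all } k\}$, check it is nowhere dense using (ii), and extract a point $\sigma$ outside the union. This transfers with no change since nothing in that argument used the cardinality of the alphabet.

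The substantive step is $(3') \Rightarrow (1)$, and here the plan is to pass from $2^{\N}$ to a countable-branching tree and invoke condition $(3)$ of Theorem \ref{TEOR:treeWeight} with $\kappa = \aleph_0$. Since a Polish space is a complete metric space of weight at most $\aleph_0 = \kappa$, assertion $(1)$ of the theorem for a countable discrete $D$ is what I want to produce, and by the theorem it is equivalent to its condition $(3)$ over $D^{<\N}$. So it suffices to show that $(3')$ (over the binary tree) implies condition $(3)$ of the theorem for a countable alphabet. The natural device is a tree embedding: encode each element $d$ of a countable $D = \{d_0, d_1, d_2, \ldots\}$ by a finite binary string (say $d_n \mapsto 1^n 0$, a prefix-free code), thereby mapping $D^{<\N}$ into $2^{<\N}$ monotonically and cofinally. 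I would then transport a given $f\colon D^{<\N} \to \mathcal{P}(\II)$ to a map $\tilde f\colon 2^{<\N}\to\mathcal{P}(\II)$ by setting $\tilde f$ on the image strings to agree with $f$ and interpolating on the intermediate binary nodes so as to preserve monotonicity and conditions (i), (ii). Applying $(3')$ yields a branch $\sigma\in 2^{\N}$ covering $\II$; I would finally argue that, because the code is prefix-free and the binary branch must pass through infinitely many code-image nodes, $\sigma$ decodes to a branch in $D^{\N}$ witnessing condition $(3)$.

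The main obstacle I anticipate is precisely this encoding step: one must arrange the interpolation of $\tilde f$ on the auxiliary binary nodes so that condition (ii) of $(3')$ (namely $\bigcup_{t \succcurlyeq s} \tilde f(t) = \II$ for \emph{every} binary node $s$, including the intermediate ones) continues to hold, and so that a decoded infinite branch is actually obtained — one must rule out a binary branch that eventually commits to the ``all ones'' path $1^{\infty}$ and never completes another code, which would fail to decode to an element of $D^{\N}$. This is handled by choosing the prefix-free code carefully and by checking that the cofinality built into hypothesis (ii) forces the witnessing branch to realize infinitely many completed codes; alternatively, one can avoid the issue entirely by noting that condition $(3)$ of the theorem is monotone in $\kappa$, so it is enough to treat the case $D = 2$ directly, in which case $(3')$ \emph{is} condition $(3)$ and the corollary is simply the theorem specialized to $\kappa = 2$ after observing that $2^{\N}$ and $D^{\N}$ for countable $D$ generate the same class of Polish spaces up to the Baire property. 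I would present whichever of these two routes keeps the bookkeeping lightest, most likely the direct specialization together with a short remark that a binary alphabet already produces all Polish spaces.
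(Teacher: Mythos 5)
Your first route is, in essence, the paper's own proof. The paper's map $\psi\colon 2^{<\N} \rto \N_{0}^{<\N}$ reads a binary string $s$ as blocks of $0$'s separated by $1$'s, discards the incomplete final block, and sets $g = f \circ \psi$; this is exactly your prefix-free coding (with the roles of $0$ and $1$ exchanged), and your ``interpolation'' is automatic: an intermediate binary node simply inherits the value of the longest completed prefix. The verification of (ii) for $g$ also works: if $s$ codes $t \in \N_{0}^{<\N}$ followed by a partial block of length $m$, apply (ii) for $f$ at the node $t \textasciicircum m$, whose code extends $s$. However, your resolution of the obstacle you flag is not correct as stated: hypothesis (ii) does \emph{not} force the branch produced by $(3')$ to realize infinitely many completed codes, and no choice of prefix-free code can rule out a degenerate (eventually constant) branch. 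What saves the argument --- and what the paper actually does --- is that the degenerate case is absorbed rather than excluded: if $\sigma(k) = 0$ for all $k \geq k_{0}$, then by monotonicity $\II = \bigcup_{k}{g(\sigma|_{k})} = g(\sigma|_{k_{0}}) = f(\psi(\sigma|_{k_{0}}))$, so $f$ already equals $\II$ at a single finite node, and then the conclusion of condition (3) holds trivially for any branch through that node.

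More seriously, the fallback route that you say you would ``most likely'' present is invalid. Theorem \ref{TEOR:treeWeight} assumes $D$ infinite, and its statement is false for $|D| = 2$: every ideal is a Baire ideal for every complete metric space of weight at most $2$ (such a space has at most two points, so its only nowhere dense subset is $\emptyset$), while not every ideal is a Baire ideal for $2^{\N}$; hence ``the theorem specialized to $\kappa = 2$'' is not a theorem. Moreover, the monotonicity you invoke goes in the wrong direction: condition (3) for an alphabet $D$ formally implies condition (3) for any smaller alphabet $D'$ (compose with a coordinatewise surjection $D \rto D'$), but the corollary needs the passage from the two-letter alphabet \emph{up} to a countably infinite one, and that is precisely the non-formal content that the coding argument supplies; asserting it as ``monotone in $\kappa$'' assumes exactly what must be proved. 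So the coding route, completed with the case analysis above, is the one to write up.
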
 

\begin{proof}
$(1) \Rightarrow (2')$ is clear and $(2') \Rightarrow (3')$ follows as in the proof of $(2) \Rightarrow (3)$ in Theorem \ref{TEOR:treeWeight}.

\noindent $(3') \Rightarrow (1)$: We will show that (3) of Theorem \ref{TEOR:treeWeight} is satisfied for the discrete topological space $D= \N_{0} := \N \cup \{ 0\}$.  Let $f\colon \N_{0}^{< \N} \rto \mathcal{P}(\II)$ be a monotonic function such that for every $s \in \N_{0}^{< \N}$ we have that $f(s)$ is hereditary and $\bigcup_{t  \succcurlyeq s}{f(t)} = \II$. We establishes now a monotonic onto map $\psi\colon 2^{< \N} \rto \N_{0}^{<\N}$. Each $s \in 2^{< \N}$ can be seen as a sequence of blocks of $0$'s separated by $1$, so it can be written as $s=(0^{n_{1}},1,0^{n_{2}},1, \ldots, 1, 0^{n_{k+1}})$ where $0^0 := \emptyset$ and $0^k := (0,...,0)$ ($k$ times) if $k>0$. Put $\psi(s) = (n_{1},\ldots,n_{k})$ with the convention $\psi((0^{k})) = \emptyset$ for each $k \geq 0$. It satisfies the conditions above, so the map $g\colon 2^{< \N} \rto \mathcal{P}(\II)$ given by $g(s) = f(\psi(s))$ is monotonic and clearly satisfies (i) and (ii). By hypothesis there is $\sigma \in 2^{\N}$ such
  that $\bigcup_{k \in \N}{g(\sigma|_{k})} = \II$. There are two possibilities: If there is $k_{0} \in \N$ such that $\sigma(k) = 0$ whenever $k \geq k_{0}$ then $\psi(\sigma|_{k}) = \psi(\sigma|_{k_{0}})$ for $ k \geq k_{0}$ and we conclude that $g(\sigma|_{k_{0}}) = \II$. On the other hand, if the support of $\sigma$ is not finite, we can write $\sigma = (0^{n_{1}},1,0^{n_{2}},1, \ldots)$ so $\tau = (n_{1},..., n_{k},...) \in \N_{0}^{\N}$ satisfies
\[ \bigcup_{k \in \N}{f(\tau|_{k})} = \bigcup_{k \in \N}{g((0^{n_{1}},1,0^{n_{2}},1, \ldots, 0^{n_{k}}, 1))} = \II. \]
\end{proof}
  
\begin{REM}
Since $2^{\N}$ is a compact space, the statements (1), (2') and (3') of Corollary \ref{CORO:baireCompact} are also equivalent to ``$\II$ is a Baire ideal for every compact metric space''. Recall that the dual filters of these ideals were introduced in \cite[Definition 4.2]{SchurFilters} under the name of\emph{category respecting filters}, as an example of filters having the \emph{Schur's property}: every weakly $\FF$-convergent to $0$ sequence in $\ell^{1}$ is $\FF$-convergent in norm to zero.
\end{REM}  
  
Looking at Theorem \ref{TEOR:treeWeight} it is natural to ask whether there is a complete metric space $X_{0}$ such that $\II$ is a Baire ideal for every complete metric space $X$ (independently of the weight of the space) whenever it is a Baire ideal for $X_{0}$. The last part of this section is devoted to find such a space.

\begin{PROP}\label{PROP:limitingCardinal}
Let $X$ be a complete metric space with $|X| > \mathfrak{c}$ and $\{D_{\alpha}\}_{\alpha \in {\mathfrak{c}}}$ a family of non-empty closed elements of $\NWD(X)$. Then, there exists a closed subset $\Omega \subseteq X$ with $|\Omega| \leq \mathfrak{c}$ and such that
$D_{\alpha} \cap \Omega$ is non-empty and belongs to $\NWD(\Omega)$.
\end{PROP}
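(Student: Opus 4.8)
The plan is to take $\Omega = \overline{S}$ for a carefully chosen set $S \subseteq X$ of cardinality at most $\mathfrak{c}$. Since each $D_{\alpha}$ is closed in $X$, the trace $D_{\alpha} \cap \Omega$ is closed in $\Omega$, and a closed set is nowhere dense exactly when it has empty interior; so I only need $S$ to force each $D_{\alpha} \cap \Omega$ to be non-empty and to have empty relative interior. Non-emptiness is cheap: I will put one point $d_{\alpha} \in D_{\alpha}$ into $S$ for every $\alpha \in \mathfrak{c}$. For the empty interior, the observation I will exploit is that, because $D_{\alpha}$ is closed and nowhere dense, its complement $X \setminus D_{\alpha}$ is dense, so every ball $B(x,r)$ contains points outside $D_{\alpha}$; I will make $S$ record enough of these witnessing points so that the obstruction propagates to $\overline{S}$.

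First I would fix, for each $\alpha < \mathfrak{c}$, each $x \in X$ and each $n \in \N$, a point $w(x,\alpha,n) \in B(x,1/n) \setminus D_{\alpha}$, which exists by the density of $X \setminus D_{\alpha}$. Then I would build $S$ as an increasing union $S = \bigcup_{k \in \N} S_{k}$, starting from $S_{0} = \{ d_{\alpha} \colon \alpha \in \mathfrak{c}\}$ and setting $S_{k+1} = S_{k} \cup \{ w(s,\alpha,n) \colon s \in S_{k},\ \alpha \in \mathfrak{c},\ n \in \N\}$. Each stage enlarges the size by a factor at most $\mathfrak{c} \cdot \aleph_{0} = \mathfrak{c}$, so $|S_{k}| \leq \mathfrak{c}$ for all $k$ and hence $|S| \leq \mathfrak{c}$. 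By construction $S$ is closed under $w$: every $s \in S$ lies in some $S_{k}$, so all its witnesses $w(s,\alpha,n)$ already belong to $S_{k+1} \subseteq S$. Finally I would set $\Omega = \overline{S}$; since $X$ is metric (hence first countable), every point of $\Omega$ is a limit of a sequence from $S$, so $|\Omega| \leq |S|^{\aleph_{0}} \leq \mathfrak{c}^{\aleph_{0}} = \mathfrak{c}$, while $\Omega$ is automatically closed. Each $D_{\alpha} \cap \Omega$ contains $d_{\alpha}$, hence is non-empty.

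It remains to verify that $D_{\alpha} \cap \Omega$ has empty interior in $\Omega$, and this is the step I expect to carry the weight of the argument, since nowhere density must survive the passage from $S$ to its closure, that is, it must hold even at limit points of $S$ that were never explicitly treated. Suppose, toward a contradiction, that some $x \in D_{\alpha} \cap \Omega$ and some $\epsilon > 0$ satisfy $B(x,\epsilon) \cap \Omega \subseteq D_{\alpha}$. Using $x \in \overline{S}$, I choose $s \in S$ with $d(x,s) < \epsilon/3$ and an $n$ with $1/n < \epsilon/3$; then $w(s,\alpha,n) \in S \subseteq \Omega$ lies within $\epsilon$ of $x$ yet avoids $D_{\alpha}$, contradicting $B(x,\epsilon) \cap \Omega \subseteq D_{\alpha}$. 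The triangle inequality is precisely what lets the witnesses attached to the dense skeleton $S$ control the interior at the new limit points, so that no separate treatment of $\Omega \setminus S$ is needed. This shows that $D_{\alpha} \cap \Omega$ is a closed subset of $\Omega$ with empty interior, hence an element of $\NWD(\Omega)$, which completes the construction. (Note that the hypothesis $|X| > \mathfrak{c}$ is not used in the construction itself; it merely records the situation in which producing a genuinely smaller $\Omega$ is interesting.)
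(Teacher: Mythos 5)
Your proof is correct and takes essentially the same approach as the paper: both construct the set in countably many stages, adjoining at each stage points outside each $D_{\alpha}$ arbitrarily close to the points already present, then take the closure and bound its cardinality by $\mathfrak{c}^{\aleph_{0}} = \mathfrak{c}$. The only cosmetic difference is that the paper attaches witness sequences only to points lying in $D_{\alpha} \cap \Omega_{n}$ and verifies nowhere density via the fact that an open set meeting $\Omega$ must meet some stage $\Omega_{n}$, whereas you attach witnesses to every point at every scale $1/n$ and conclude with a triangle-inequality argument at limit points.
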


\begin{proof}
We start by constructing an increasing sequence $(\Omega_{n})_{n \geq 0}$ of subsets of $X$ in the following way. Take $\Omega_{0}$ an arbitrary set with $|\Omega_{0}| \leq \mathfrak{c}$ and $\Omega_{0} \cap D_{\alpha} \neq \emptyset$ for every $\alpha \in \mathfrak{c}$. Now suppose that $\Omega_{n}$ has been constructed. Using that each $D_{\alpha}$ is nowhere dense in $X$, for each $\alpha \in \mathfrak{c}$ and every $p \in
\Omega_{n} \cap D_{\alpha}$ we can find a sequence $(y^{p,
\alpha}_{k})_{k \in \N}$ in $X \setminus D_{\alpha}$ which converges to $p$. Then define
\[ \Omega_{n+1} := \Omega_{n} \cup \{ y_{k}^{p, \alpha}\colon p \in D_{\alpha}
\cap \Omega_{n}, \alpha \in \mathfrak{c}, k \in \N \}. \]
Define $\Omega$ as the closure of $ \bigcup_{n \in \N}{\Omega_{n}}$ in $X$. Notice that $\Omega$ has cardinality at most the continuum since each of its elements is the limit of a sequence in $\bigcup_{n \in \N}{\Omega_{n}}$, which has cardinality less or equal than $\mathfrak{c}$. To finish the proof we have to check that $D_{\alpha} \cap \Omega \in \NWD(\Omega)$ for every $\alpha \in \mathfrak{c}$. Fix such an $\alpha$ and let $U$ be an open subset of $X$ with $U \cap \Omega \neq \emptyset$. We can find $n_{0} \in \N$ with $U \cap \Omega_{n_{0}} \neq \emptyset$, and by construction there exists $y_{k}^{p, \alpha} \in U \cap \Omega_{n_{0}+1} \setminus D_{\alpha}$, so $U \cap \Omega$ cannot be contained in $D_{\alpha} \cap \Omega$.
\end{proof}

 \begin{CORO}
 Let $\II$ be an ideal on $\N$. Then, $\II$ is a Baire ideal (for every complete metric space) whenever it is a Baire ideal for $D^{\N}$ where $D$ is a discrete space with $|D| = \mathfrak{c}$.
 \end{CORO}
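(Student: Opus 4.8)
The plan is to combine Theorem \ref{TEOR:treeWeight} with Proposition \ref{PROP:limitingCardinal}, reducing the case of an arbitrary complete metric space to one of weight at most $\mathfrak{c}$. First I would observe that, by the equivalence $(1)\Leftrightarrow(2)$ of Theorem \ref{TEOR:treeWeight} applied with $\kappa=\mathfrak{c}$, the hypothesis that $\II$ is a Baire ideal for $D^{\N}$ (with $|D|=\mathfrak{c}$) is exactly the statement that $\II$ is a Baire ideal for every complete metric space of weight at most $\mathfrak{c}$. Thus it suffices to prove the following: if $\II$ is a Baire ideal for all complete metric spaces of weight $\le \mathfrak{c}$, then it is a Baire ideal for every complete metric space $X$.

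So let $X$ be an arbitrary complete metric space and $F\colon \II \to \NWD(X)$ a monotonic map; arguing by contradiction, suppose $\bigcup_{A\in\II}F(A)=X$. Replacing each $F(A)$ by its closure $\overline{F(A)}$ keeps $F$ monotonic, keeps the values nowhere dense, and only enlarges the union, so I may assume that every $F(A)$ is closed. If $|X|\le\mathfrak{c}$ then $w(X)\le\mathfrak{c}$ (for metric spaces the weight equals the density character, which is bounded by the cardinality), and we already contradict the hypothesis. Hence I may assume $|X|>\mathfrak{c}$.

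The decisive step is to shrink $X$ to a space of size $\le\mathfrak{c}$ on which $F$ still produces a counterexample. Since $\II\subseteq\mathcal{P}(\N)$ we have $|\II|\le\mathfrak{c}$, so I can enumerate (with repetitions if necessary) the nonempty sets among $\{F(A)\colon A\in\II\}$ as a family $\{D_{\alpha}\}_{\alpha\in\mathfrak{c}}$ of nonempty closed members of $\NWD(X)$. Applying Proposition \ref{PROP:limitingCardinal} yields a closed subset $\Omega\subseteq X$ with $|\Omega|\le\mathfrak{c}$ such that $D_{\alpha}\cap\Omega\in\NWD(\Omega)$ for every $\alpha$. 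Now define $G\colon\II\to\mathcal{P}(\Omega)$ by $G(A)=F(A)\cap\Omega$. Then $G$ is monotonic; each $G(A)$ lies in $\NWD(\Omega)$ (when $F(A)\neq\emptyset$ this is the conclusion of the Proposition, and when $F(A)=\emptyset$ it is trivial); and $\bigcup_{A}G(A)=\big(\bigcup_{A}F(A)\big)\cap\Omega=X\cap\Omega=\Omega$.

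Finally, $\Omega$ is a closed subspace of a complete metric space, hence itself a complete metric space, and $w(\Omega)\le|\Omega|\le\mathfrak{c}$. The map $G$ therefore witnesses that $\II$ fails to be a Baire ideal for $\Omega$, contradicting the standing hypothesis. I expect the only delicate points to be bookkeeping rather than genuine obstacles: verifying that one may pass to closures without loss of generality, that $|\II|\le\mathfrak{c}$ permits indexing the family by $\mathfrak{c}$, and the passage from the cardinality bound $|\Omega|\le\mathfrak{c}$ to the weight bound $w(\Omega)\le\mathfrak{c}$ needed to invoke Theorem \ref{TEOR:treeWeight}.
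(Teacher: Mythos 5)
Your proposal is correct and follows essentially the same route as the paper: reduce via Theorem \ref{TEOR:treeWeight} (with $\kappa=\mathfrak{c}$) to spaces of cardinality/weight at most $\mathfrak{c}$, pass to closures, and use Proposition \ref{PROP:limitingCardinal} to shrink a large $X$ to a closed $\Omega$ with $|\Omega|\le\mathfrak{c}$ on which $G(A)=F(A)\cap\Omega$ would violate the Baire property. The only differences are presentational (you argue by contradiction and spell out the bookkeeping about empty values and the bound $w(\Omega)\le|\Omega|$), not mathematical.
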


\begin{proof}
Let $X$ be a complete metric space. If $|X| \leq \mathfrak{c}$ then $\II$ is a Baire ideal for $X$ by Theorem \ref{TEOR:treeWeight}. We prove now that this is also true if $X$ has cardinality strictly bigger than the continuum. Given a monotonic function $F\colon \II \rto \NWD(X)$ we can assume that its images are closed sets by taking the closure. Proposition \ref{PROP:limitingCardinal} provides a closed subset $\Omega$ of $X$ of cardinality at most $\mathfrak{c}$ such that the map $G\colon \II \rto \NWD(\Omega)$ given by $G(A) = F(A) \cap \Omega$ is well-defined and monotonic. Therefore $\Omega \neq \bigcup{\{ G(A)\colon A \in \II \}}$ since $\II$ is a Baire ideal for $\Omega$, which implies that $X \neq \bigcup{\{ F(A)\colon A \in \II \}}$. 
\end{proof}


\section{Baire ideals in some classes of ideals}

\subsection{Analytic ideals}

The ideals we are dealing with in this section are the mostly used one (see \cite{Farah}, \cite{SoleckiAnalytic}). Roughly speaking, all the ideals that can be defined by explicit formulas are analytic. Due to one of equivalent definitions, see \cite[Theorem 7.9, Definition 14.1]{kechris-descriptive}, a subset $A$ of a Polish space $X$ is \emph{analytic} if it can be represented as an image of a continuous map acting from $\N^{\N}$ to $X$. In particular, all Borel sets are analytic. An ideal $\II$ is \emph{analytic} if it forms an analytic subset of the Polish topological space $2^{\N}$.

\begin{PROP}
If $\II$ is an analytic ideal on $\N$ which is not countably generated, then it is not a Baire ideal for some Polish space.
\end{PROP}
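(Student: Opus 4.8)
The plan is to work directly with the covering formulation rather than with condition $(3')$: I will exhibit a single Polish space $X$, a monotone map $F\colon\II\to\NWD(X)$ with closed values, and show $\bigcup_{A\in\II}F(A)=X$, which contradicts Definition \ref{DEFI:baireIdeal}. Since $\II$ contains $\Fin$, it is a nonempty analytic subset of $2^{\N}$, so by \cite{kechris-descriptive} there is a continuous surjection $\varphi\colon\N^{\N}\to\II$, where subsets of $\N$ are identified with their characteristic functions in $2^{\N}$. For $A\in\II$ the candidate set is
\[ F(A)=\{z\colon \varphi(z)\subseteq A\}=\varphi^{-1}(\mathcal{P}(A)), \]
intersected with a suitable closed subspace $X\subseteq\N^{\N}$. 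Two of the three required properties come for free: $F$ is monotone because $A\subseteq B$ gives $\mathcal{P}(A)\subseteq\mathcal{P}(B)$, and $F$ covers its domain since every $z$ lies in $F(\varphi(z))$. Moreover $\mathcal{P}(A)=\{x\in 2^{\N}\colon x\le\chi_{A}\}$ is closed in $2^{\N}$, so each $F(A)$ is closed.

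The only genuine difficulty is nowhere density. The set $F(A)\cap X$ fails to be nowhere dense precisely when some nonempty relatively open $O\subseteq X$ satisfies $\varphi(O)\subseteq\mathcal{P}(A)$, that is, when the image $\varphi(O)$ is bounded above in $(\II,\subseteq)$. Thus I need a closed $X$ on which \emph{every} nonempty relatively open set has $\subseteq$-unbounded $\varphi$-image; then no such $O$ exists for any $A$, and every $F(A)\cap X$ is a closed nowhere dense set. To build $X$ I run a Cantor--Bendixson-type derivative on $\N^{\N}$: set $Y_{0}=\N^{\N}$ and
\[ Y_{\alpha+1}=\{z\in Y_{\alpha}\colon \text{every relative neighbourhood of $z$ in $Y_{\alpha}$ has $\subseteq$-unbounded $\varphi$-image}\}, \]
taking intersections at limit stages. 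Each $Y_{\alpha}$ is closed, each removed set $Y_{\alpha}\setminus Y_{\alpha+1}$ is relatively open and covered by basic open sets whose $\varphi$-image is bounded, and the decreasing sequence stabilises at a countable stage because $\N^{\N}$ is hereditarily Lindel\"of. Let $X$ be the stable value $K$; by construction every nonempty relatively open subset of $K$ has $\subseteq$-unbounded image, which yields the nowhere density above.

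\textbf{The key point}, and the only place the hypothesis enters, is that $K\neq\emptyset$. If $K$ were empty, then $\N^{\N}$ would be the union of the relatively open bounded pieces discarded along the (countably many) stages; choosing one $\subseteq$-upper bound $A_{n}\in\II$ for each of the countably many bounded basic pieces involved, surjectivity of $\varphi$ would force every member of $\II$ to be contained in some $A_{n}$, making $\{A_{n}\}$ a countable basis and $\II$ countably generated, contrary to assumption. Hence $K$ is a nonempty closed, thus Polish, subspace for which $\II$ is not a Baire ideal. I expect the main obstacle to lie exactly in this last step: one must verify that the derivative stabilises at a countable ordinal and that only countably many bounds $A_{n}$ are produced, so that emptiness of $K$ genuinely forces a countable basis for $\II$.
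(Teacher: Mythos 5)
Your proof is correct and shares its skeleton with the paper's: both take a continuous surjection $\varphi\colon \N^{\N}\to\II$ (possible since $\II$ is a nonempty analytic set), both use the sets $\varphi^{-1}(\mathcal{P}(A))=\{z\colon \varphi(z)\subseteq A\}$ as the monotone family of closed sets, and both prune $\N^{\N}$ down to a closed, hence Polish, subspace on which these sets have empty interior, with non-countable generation entering only to keep the pruned space nonempty. The difference lies in how the pruning is done. The paper removes, in a \emph{single} step, all basic open sets $U_{s}$ whose $\varphi$-image is contained in a countably generated subideal of $\II$; since that notion is stable under countable unions and under adjoining one extra generator, one step suffices: if a surviving trace $U_{s}\cap X$ were contained in $F(A)$, the whole image of $U_{s}$ would lie in the subideal generated by $A$ together with the countably many subideals attached to the removed pieces, contradicting survival. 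Your notion of badness, ``image bounded above by a single element of $\II$'', lacks this closure property --- after one removal the surviving trace of a basic set can still have bounded image --- and that is exactly what your transfinite Cantor--Bendixson derivative compensates for; the price is the stabilization argument and the bookkeeping that only countably many bounds $A_{n}$ are ever produced, both of which you correctly reduce to second countability of $\N^{\N}$ and countability of the family $\{U_{s}\colon s\in\N^{<\N}\}$ (and both of which do hold: a strictly decreasing transfinite chain of closed subsets of a second countable space has countable length, and each stage discards only countably many basic pieces). What each approach buys: the paper's one-step removal is shorter, with no transfinite induction; your stable set $K$ satisfies \emph{by construction} that every nonempty relatively open subset has $\subseteq$-unbounded image, so the nowhere-density check is immediate, whereas the one-step approach needs the closure property of countably generated subideals at precisely that point (a step the paper's own write-up treats quite tersely). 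Both routes are sound, and yours is complete as written.
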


\begin{proof}
Suppose that there exists a continuous function $g\colon \N^{\N} \rto 2^{\N}$ whose image is the ideal $\II$. Following the notation of section~\ref{sec:characterizations}, for each $s \in \N^{<\N}$ we write
\[ U_{s} = \{ \sigma \in \N^{\N}\colon \sigma \succcurlyeq s \} \mbox{ \hspace{2mm} and \hspace{2mm} } [s]^{g}:= \{ g(\sigma)\colon \sigma \in \N^{\N}, \sigma \succcurlyeq s \}.\]
We consider now the following set
\[ X:= \N^{\N} \setminus \bigcup{\{ U_{s}\colon [s]^{g} \mbox{ is contained in a countably generated subideal of $\II$} \}}. \]
It is non-empty since $[\emptyset]^{g} = \II$ is not countably generated. Moreover, it is a closed subset of $\N^{\N}$ since the sets $U_{s}$ are open, so $X$ is a Polish space. Consider the map $ F\colon \II \rto \NWD(X)$ defined for each $A \in \II$ as
\[ F(A)= \{ \sigma \in X\colon g(\sigma) \subset A \}.\]
Let us see that for every $A \in \II$ we have that $F(A) \in \NWD(X)$. Let $U_{s} \cap X$ be a (relatively) open set in $X$. If $U_{s} \cap X \neq \emptyset$ then $[s]^{g}$ is not countably generated, which in particular implies that there is $\sigma \in \N^{\N}, \sigma \succcurlyeq s$ such that $g(\sigma) \nsubseteq A$. Therefore there is $m \in \N$ with $g(\sigma) \cap \{ 1,\ldots, m\} \nsubseteq A$ and by continuity we can find $\sigma \succcurlyeq t \succcurlyeq s$ with $g(\tau) \cap \{ 1, \ldots,m\} = g(\sigma) \cap \{ 1,\ldots,m\}$ for every $\tau \succcurlyeq t$, which means that $\sigma \in U_{t} \subset U_{s} \setminus F(A)$. Hence, $F$ is a well-defined monotonic function. On the other hand, $X = \bigcup{\{ F(A)\colon A \in \II \}}$ since $\sigma \in F(g(\sigma))$ for every $\sigma \in X$.
\end{proof}

\subsection{Generalized uncountably generated P-ideals}

An ideal $\II$ on $\N$ is said to be a \emph{P-ideal} if for every sequence $(A_{n})_{n \in \N}$ in $\II$ there exists $B \in \II$ such that $A_{n} \setminus B \in \Fin$ for each $n \in \N$. Moreover, such an ideal $\II$ is not countably generated if and only if for every $A \in \II$ there exists $B \in \II$ satisfying $B \setminus A\notin \Fin$. This motivates the following definition. 

\begin{DEFI}
\label{DEFI:GeneralPideal} Let $\II, \II_{0}$ be ideals on
$\N$. We say that $\II$ is a \emph{P($\II_{0}$)-ideal} if it satisfies the following conditions:
\begin{enumerate}
\item[(I)] Given $(A_{n})_{n \in \N}$ in $\II$ there is $A
\in \II$ such that $A_{n} \setminus A \in \II_{0}$ for each $n \in
\N$.
\item[(II)] For each $A \in \II$ there exists $B \in \II$ such that $B
\setminus A \notin \II_{0}$.
\end{enumerate}
\end{DEFI}
\noindent We show now how to construct P($\II_{0}$)-ideals which are not P($\Fin$)-ideals.
\begin{EJEM}
Let $\II, \JJ$ be ideals on $\N$ where $\II$ is an uncountably generated P-ideal. Take the direct sum $\II \oplus \JJ$,i.e. the family of all subsets $A \subseteq \N \times \{ 0,1\} \cong \N$ such that
\[ A^{0}=\{ n \in \N\colon (n,0) \in A \} \in \II \mbox{ \hspace{2mm} and \hspace{2mm} }  A^{1}=\{ n \in \N\colon (n,1) \in A \} \in \JJ, \]
and write $\II' := \II \oplus \Fin$, $\JJ' := \Fin \oplus \JJ$. We claim that $\II \oplus \JJ$ is a P($\JJ'$)-ideal. To see \emph{(I)}, let $(A_{n})_{n \in \N}$ be a sequence of elements in $\II \oplus \JJ$. Since $\II$ is a P-ideal, there exists $B \in \II$ such that $A_{n}^{0} \setminus B \in \Fin$ for every $n \in \N$, so 
\[ A_{n} \setminus (B \times \{0\}) \subseteq ((A_{n}^{0} \setminus B) \times \{ 0\}) \cup (A_{n}^{1} \times \{ 1\}) \in \JJ'.\]
Since $\II$ is not countably generated, given $A \in \II \oplus \JJ$ we can find $B \in \II$ satisfying $B \setminus A^{0} \notin \Fin$, so $(B \times \{ 0\}) \setminus A \notin \JJ'$. This proves \emph{(II)}. 

Observe that if $\II \oplus \JJ$ is a P($\Fin$)-ideal then $\JJ$ is a P-ideal, since given a family $(J_{n})_{n \in \N}$ in $\JJ$ there exists $A \in \II \oplus \JJ$ such that $(J_{n} \times \{ 1\}) \setminus A \in \Fin$ and hence $J_{n} \setminus A^{1} \in \Fin$ for every $n \in \N$. In particular taking $\JJ$ not being a P-ideal we get that $\II \oplus \JJ$ is a P($\JJ'$)-ideal but not a P($\Fin$)-ideal. 
\end{EJEM}

Recall that $(\ell^{\infty}, \lVert \cdot \lVert_{\infty})$ is the Banach space of all bounded real sequences. If $\II_{0}$ is an ideal on $\N$, we write 
\[ c_{0}(\II_{0}) = \left\{ (x_{n})_{n \in \N} \in \ell^{\infty} \colon \{ n \in \N\colon |x_{n}| > \varepsilon \} \in \II_{0} \mbox{ for every $\varepsilon > 0$} \right\}. \]
It is easy to check that it is a closed subspace of $\ell^{\infty}$. If $(x_{n})_{n \in \N}, (y_{n})_{n \in \N}$ are sequences of real numbers, we will write $(x_{n})_{n \in \N} \leq (y_{n})_{n \in \N}$ if $x_{n} \leq y_{n}$ for each $n \in \N$. If $A \subset \N$, we will denote by $\chi_{A}$ the characteristic function of $A$. 

\begin{PROP}
\label{PROP:PidealsNotBideals}
Let $\II, \II_{0}$ be ideals on $\N$ such that $\II$ is a P($\II_{0}$)-ideal. Then $\II$ is not a Baire ideal.
\end{PROP}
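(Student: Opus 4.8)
The plan is to exhibit a single complete metric space witnessing the failure of the Baire property. The natural candidate is the Banach space $X = c_0(\II)$, that is, the closed subspace of $(\ell^{\infty}, \lVert \cdot \rVert_{\infty})$ obtained by applying the construction $c_0(\cdot)$ to the ideal $\II$ itself: the bounded sequences all of whose thresholds $\{n : |x_n| > \varepsilon\}$ belong to $\II$. It is worth stressing that it is $\II$, and \emph{not} $\II_0$, that should govern the space; this is exactly what makes the covering criterion below simultaneously attainable (through condition (I)) and nontrivial (if one used $c_0(\II_0)$ the thresholds would automatically lie in $\II_0$ and the sets $f(A)$ would degenerate to the whole space). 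Being closed in $\ell^{\infty}$, the space $c_0(\II)$ is complete.

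On this space I would define the monotone map $f\colon \II \rto \NWD(X)$ by
\[ f(A) = \{ x \in c_0(\II) \colon \{n \in \N \colon |x_n| > \varepsilon\} \setminus A \in \II_0 \text{ for every } \varepsilon > 0 \}. \]
Monotonicity in $A$ is immediate, and a standard $\varepsilon/2$ argument shows that each $f(A)$ is norm-closed. The covering $\bigcup_{A \in \II} f(A) = X$ is where property (I) is used: given $x \in c_0(\II)$, the thresholds $A_k = \{n \colon |x_n| > 1/k\}$ form a sequence in $\II$, so (I) furnishes $A \in \II$ with $A_k \setminus A \in \II_0$ for all $k$; since every threshold $\{|x_n| > \varepsilon\}$ is contained in some $A_k$, we obtain $x \in f(A)$.

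The main work—and the step I expect to be the main obstacle—is showing that each $f(A)$ is nowhere dense, and here property (II) enters. Fix $y \in f(A)$ and $\delta > 0$; the goal is to perturb $y$ by less than $\delta$, staying inside $c_0(\II)$, so as to leave the closed set $f(A)$. Property (II) supplies $B \in \II$ with $S := B \setminus A \notin \II_0$; crucially $S \in \II$ and $S \cap A = \emptyset$, so the perturbation $z := \tfrac{\delta}{2}\chi_{S}$ lies in $c_0(\II)$ and has norm $\delta/2$. The delicate point is that $y$ may already be large on part of $S$; but since $y \in f(A)$, the set $T = \{n \colon |y_n| > \delta/4\} \setminus A$ belongs to $\II_0$, whence $S \setminus T \notin \II_0$, and on $S \setminus T$ one has $|(y+z)_n| \geq \tfrac{\delta}{2} - \tfrac{\delta}{4} = \tfrac{\delta}{4}$. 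Consequently $\{n \colon |(y+z)_n| > \delta/8\} \setminus A$ contains $S \setminus T \notin \II_0$, so $y + z \notin f(A)$.

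Since $f(A)$ is closed and $y$ was arbitrary, this shows $f(A)$ has empty interior, hence is nowhere dense; together with monotonicity and the covering, it follows that $\II$ is not a Baire ideal for $c_0(\II)$, and therefore not a Baire ideal. The care required is concentrated in the escape estimate of the last paragraph: the perturbation must remain inside $c_0(\II)$ (which forces the use of a scaled characteristic function of an $\II$-set) while genuinely exiting $f(A)$ (which forces $S \notin \II_0$, i.e. property (II), combined with the control of $y$ on $S$ coming from $y \in f(A)$).
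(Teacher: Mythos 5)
Your proof is correct, and it reaches the conclusion through a different witnessing space than the paper's. The paper works in the quotient $E=\ell^{\infty}/c_{0}(\II_{0})$, takes as its nowhere dense pieces the order intervals $F_{A}=\{[(x_{n})_{n\in\N}]\colon 0\leq (x_{n})_{n\in\N}\leq \chi_{A}\}$, and takes as its space the union $X=\bigcup\{F_{A}\colon A\in\II\}$; there the covering is trivial by construction, and property (I) is spent on showing that countably many $F_{A_{n}}$'s are absorbed by a single $F_{B}$, which is what makes $X$ closed in $E$, hence complete. You invert these roles: your space $c_{0}(\II)\subset\ell^{\infty}$ is closed (hence complete) for free, and property (I) is spent instead on proving that the sets $f(A)$ cover it. The heart of both arguments is identical: property (II) produces $B$ with $S=B\setminus A\notin\II_{0}$, and adding a small multiple of $\chi_{S}$ moves any point of $f(A)$ (respectively $F_{A}$) out of that set while staying inside the space, so each piece is closed with empty interior; your bookkeeping with the sets $T=\{n\colon |y_{n}|>\delta/4\}\setminus A\in\II_{0}$ and $S\setminus T\notin\II_{0}$ is exactly the control needed and is carried out correctly. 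What your route buys is the elimination of the quotient, of the equivalence-class computations, and of the closedness-of-the-union step, so the argument is somewhat cleaner as a standalone proof. What the paper's route buys is mainly continuity with what follows: the family $\{F_{C}\colon C\in\II\}$ and the space assembled from it are reused verbatim in the proof of Proposition~\ref{prop-majorated}; your construction would adapt to that proposition just as routinely (keeping the space $c_{0}(\II)$ and indexing $f$ by the larger ideal $\JJ$, with $\II_{0}$-tallness supplying the set $S$), so the difference there is one of bookkeeping rather than substance.
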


\begin{proof}
We will find a closed subspace $X$ of $E := \ell^{\infty}/c_{0}(\II_{0})$ for which $\II$ is not a Baire ideal. For each $A \in \II$ consider $F_{A} = \{ [(x_{n})_{n \in \N}] \colon 0 \leq (x_{n})_{n \in \N} \leq \chi_{A} \}$. Notice that $[(x_{n})_{n \in \N}] \in F_{A}$ if and only if there is $(y_{n})_{n \in \N} \in c_{0}(\II_{0})$ such that $0 \leq (x_{n} + y_{n})_{n \in \N} \leq \chi_{A}$, or equivalently, if for every $\varepsilon > 0$ we have that
\[ \{ n \in \N \setminus A\colon |x_{n}| > \varepsilon\} \cup \{ n \in A\colon x_{n}> 1+ \varepsilon \} \cup \{ n \in A\colon x_{n} < -\varepsilon \} \in \II_{0}. \]
From this it easily follows that each $F_{A}$ is closed. We can also deduce:
\begin{enumerate}
\item \label{item1} If $A, B \in \II$ then $A \setminus B \in \II_{0}$ if and only if $F_{A} \subseteq F_{B}$.

\noindent Suppose that $A \setminus B \in \II_{0}$ and let $(x_{n})_{n \in \N}$ be a sequence with $0 \leq (x_{n})_{n \in \N} \leq \chi_{A}$. Define $(y_{n})_{n \in \N}$ as $y_{n} = 0$ if $n \notin A \setminus B$ and $y_{n} = - x_{n}$ if $n \in A \setminus B$. It is clear that $(y_{n})_{n \in \N} \in c_{0}(\II_{0})$ and that $ [(x_{n})_{n \in \N}] = [(x_{n})_{n \in \N} + (y_{n})_{n \in \N}] \in F_{B}$. Conversely, if $F_{A} \subseteq F_{B}$ then $[\chi_{A}] \in F_{B}$ and the condition above implies that $A \setminus B \subset \{ n \in \N \setminus B\colon \chi_{A}(n) > 1/2  \} \in \II_{0}$.

\item \label{item2} For every $(A_{n})_{n \in \N} \subseteq \II$ there exists $B \in \II$ such that
$ \bigcup_{n \in \N}{F_{A_{n}}} \subset F_{B}$. 

\noindent This follows from Definition \ref{DEFI:GeneralPideal} (I) and the previous property.

\item \label{item3} If $A \in \II$ there is $B \in \II$ such that $F_{A}$ is a nowhere dense subset of $F_{B}$.

\noindent By Definition \ref{DEFI:GeneralPideal} (II), there exists $B \in \II$ such that $B \setminus A \notin \II_{0}$. Replacing $B$ by $A \cup B$, we can assume that $A \subset B$ so that $ F_{A} \subset F_{B}$. To see that $F_{A}$ is nowhere dense in $F_{B}$, take a sequence $0 \leq (x_{n})_{n \in \N} \leq \chi_{A}$ and $V$ an open set in $E$ containing $[(x_{n})_{n \in \N}]$. Fixed $0< \delta < 1$, the sequence $(y_{n})_{n \in \N} = (x_{n})_{n \in \N} + \delta \chi_{B \setminus A}$ satisfies that $\| [(x_{n})_{n \in \N}] - [(y_{n})_{n \in \N}] \| \leq \delta$. We can take $\delta$ small enough so that $[(y_{n})_{n \in \N}] \in V$. On the other hand, $[(y_{n})_{n \in \N}] \notin F_{A}$ since 
$B \setminus A \subset \{ n \in \N \setminus A\colon |y_{n}| > \delta/2 \}$, but $[(y_{n})_{n \in \N}] \in F_{B}$ because $0 \leq (y_{n})_{n \in \N} \leq \chi_{B}$. To summarize, we have proved that $ [(y_{n})_{n \in \N} ] \in (V \cap F_{B}) \setminus F_{A}$.
\end{enumerate}

Finally, the set $X := \bigcup{\{ F_{A}\colon A \in \II\}}$ is a closed subspace of $E$ (by (\ref{item2})) that can be covered by a family of nowhere dense subsets $\{ F_{A}\colon A \in \II\} \subset \NWD(X)$ (by (\ref{item3})) with the property that $F_{A} \subset F_{B}$ whenever $A \subset B \in \II$ (by (\ref{item1})). 
\end{proof}

Following \cite{Farah}, the \emph{orthogonal} of an ideal $\II$ on $\N$ is defined as the set of all $A \in \mathcal{P}(\N)$ such that $A \cap B \in \Fin$ for every $B \in \II$. The ideal $\II$ is said to be \emph{tall} if its orthogonal is equal to $\Fin$, or equivalently, if given $A \notin \Fin$ there is $B \in \II \setminus \Fin$ with $B \subset A$. We now extend these definitions.

\begin{DEFI}
Let $\II, \II_{0}$ be ideals on $\N$. The \emph{orthogonal} of $\II$ respect to $\II_{0}$ is defined as $ \II^{\perp \II_{0}} := \left\{ A \subset \N\colon A \cap B \in \II_{0} \text{ for all $B \in \II$} \right\}$. We say that $\II$ is \emph{$\II_{0}$-tall} if $\II^{\perp \II_{0}} = \II_{0}$, i.e. if for every $A \notin \II_{0}$ there exists $B \in \II\setminus \II_{0}$ such that $B \subset A$.
\end{DEFI}  

\begin{PROP} \label{prop-majorated}
Suppose that $\II$ is a \emph{P($\II_{0}$)}-ideal which is $\II_{0}$-tall. Then every ideal $\JJ$ containing $\II \cup \II_{0}$ is not a Baire ideal.
\end{PROP}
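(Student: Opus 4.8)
The plan is to reuse the complete metric space built in the proof of Proposition~\ref{PROP:PidealsNotBideals} rather than constructing a new one tailored to $\JJ$. Recall that since $\II$ is a P($\II_{0}$)-ideal, that proof produces, inside $E=\ell^{\infty}/c_{0}(\II_{0})$, the closed sets $F_{A}=\{[(x_{n})_{n}]\colon 0\leq (x_{n})_{n}\leq \chi_{A}\}$ (the defining formula makes sense for every $A\subseteq\N$, and the same computation shows each $F_{A}$ is closed), together with the fact that $X:=\bigcup\{F_{A}\colon A\in\II\}$ is a closed, hence complete, subspace of $E$. I will show that $\JJ$ fails to be a Baire ideal witnessed by this same space $X$, by extending the covering family from $\II$ to all of $\JJ$.

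For each $A\in\JJ$ I would set $f(A):=F_{A}\cap X$. Monotonicity of $f$ is immediate from $A\subseteq A'\Rightarrow F_{A}\subseteq F_{A'}$, and since $\II\subseteq\JJ$ and $F_{A}=F_{A}\cap X$ for $A\in\II$, one gets $\bigcup_{A\in\JJ}f(A)\supseteq\bigcup_{A\in\II}F_{A}=X$, so $f$ covers $X$. The only nontrivial point is that $f(A)=F_{A}\cap X$ is nowhere dense in $X$ for every $A\in\JJ$, and this is exactly where $\II_{0}$-tallness enters.

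To verify nowhere density, fix $A\in\JJ$ and a nonempty relatively open set $V\cap X$ (with $V$ open in $E$), and choose $[(x_{n})_{n}]\in V\cap X$, so that $0\leq(x_{n})_{n}\leq\chi_{C}$ for some $C\in\II$. The key observation is that $\N\setminus(A\cup C)\notin\II_{0}$: indeed $A\cup C\in\JJ$ because $A\in\JJ$ and $C\in\II\subseteq\JJ$, so if $\N\setminus(A\cup C)$ belonged to $\II_{0}\subseteq\JJ$ we would get $\N\in\JJ$, contradicting that $\JJ$ is an ideal. Applying $\II_{0}$-tallness to $\N\setminus(A\cup C)$ yields a set $S\in\II\setminus\II_{0}$ with $S\subseteq\N\setminus(A\cup C)$. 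For a small $\delta\in(0,1)$ put $(y_{n})_{n}=(x_{n})_{n}+\delta\chi_{S}$. Then $\|[(x_{n})_{n}]-[(y_{n})_{n}]\|\leq\delta$, so $[(y_{n})_{n}]\in V$ for $\delta$ small; moreover $0\leq(y_{n})_{n}\leq\chi_{C\cup S}$ with $C\cup S\in\II$, so $[(y_{n})_{n}]\in X$; and $[(y_{n})_{n}]\notin F_{A}$, because taking $\varepsilon=\delta/2$ in the membership criterion for $F_{A}$ the set $\{n\in\N\setminus A\colon |y_{n}|>\delta/2\}$ contains $S\notin\II_{0}$. As $F_{A}$ is closed, $(V\cap X)\setminus F_{A}$ is then a nonempty relatively open subset of $V\cap X$ disjoint from $f(A)$, proving that $f(A)\in\NWD(X)$. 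Consequently $f\colon\JJ\to\NWD(X)$ is monotonic with $\bigcup_{A\in\JJ}f(A)=X$, so $\JJ$ is not a Baire ideal for the complete metric space $X$, and a fortiori not a Baire ideal.

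The step I expect to be the main obstacle is precisely this nowhere-density verification, for a structural reason: $\JJ$ need not satisfy any P($\II_{0}$)-type condition, so one cannot simply rerun the construction of Proposition~\ref{PROP:PidealsNotBideals} with $\JJ$ in place of $\II$ (there is no reason the analogue of property~(I) holds, and $\bigcup\{F_{A}\colon A\in\JJ\}$ need not even be closed). Freezing the ambient space $X$ built from the well-behaved ideal $\II$ and only intersecting with it circumvents this difficulty; the price paid is that every perturbation must remain inside $X$, which forces the perturbing set $S$ to lie in $\II$, and $\II_{0}$-tallness is exactly the hypothesis guaranteeing that such an $S$, disjoint from $A\cup C$ and outside $\II_{0}$, exists.
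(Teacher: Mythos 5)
Your proof is correct and follows essentially the same route as the paper: you reuse the space $X=\bigcup\{F_{C}\colon C\in\II\}$ from Proposition~\ref{PROP:PidealsNotBideals}, cover it by the sets $F_{A}\cap X$ (which in fact coincide with the paper's choice $f(A)=\bigcup\{F_{C}\colon C\in\II,\ C\subset A\}$, since $F_{A}\cap F_{C}=F_{A\cap C}$), and establish nowhere density by the same $\delta\chi_{S}$ perturbation, with $\II_{0}$-tallness applied to the complement of a set in $\JJ$ supplying the perturbing set outside $\II_{0}$. The only cosmetic difference is that you apply tallness pointwise to $\N\setminus(A\cup C)$ instead of once to $\N\setminus A$, which slightly streamlines the verification but does not change the argument.
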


\begin{proof}
Following the notation of the proof of Proposition \ref{PROP:PidealsNotBideals} consider the family $\{ F_{C}\colon C \in \II\}$ of closed subsets of $E = \ell^{\infty}/c_{0}(\II_{0})$ where $F_{C} = \{ [(x_{n})_{n \in \N}] \colon 0 \leq (x_{n})_{n \in \N} \leq \chi_{C} \}$. We define a function $f\colon \JJ \rto \mathcal{P}(E)$ that assigns to each $A \in \JJ$ the element
\[ f(A) = \bigcup{\{ F_{C}\colon C \in \II, C \subset A \}}. \]
Since $\bigcup{\{ f(A)\colon A \in \JJ \}} = \bigcup{\{ F_{C}\colon C \in \II \}} = X$ was a complete metric space, we just have to check that $f(A) \in \NWD(X)$ for every $A \in \JJ$ to finish the proof. Fix $A \in \JJ$. Since $\N \setminus A \notin \JJ \supset \II_{0}$, there exists by hypothesis $B \in \II \setminus \II_{0}$ with $B \subset \N \setminus A$. Now we can repeat the argument of (\ref{item3}) in the proof of Proposition \ref{PROP:PidealsNotBideals} to deduce that $f(A)$ is a nowhere dense subset of $f(A \cup B)$, and hence of $X$.
\end{proof}

\begin{CORO} \label{majorated}
If $\II$ is a tall P-ideal on $\N$ then every ideal $\JJ \supseteq \II$ is not a Baire ideal.
\end{CORO}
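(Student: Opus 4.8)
The plan is to deduce this corollary directly from Proposition \ref{prop-majorated} by specializing to the case $\II_{0} = \Fin$. Concretely, if I can show that a tall P-ideal $\II$ is automatically both a P($\Fin$)-ideal and $\Fin$-tall, then Proposition \ref{prop-majorated} applies with $\II_{0} = \Fin$; and since $\II \cup \Fin = \II$ (every ideal considered here contains $\Fin$), its conclusion reads exactly as required: every ideal $\JJ \supseteq \II$ fails to be a Baire ideal.

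Two of the three verifications are immediate. First, the notion of $\Fin$-tall coincides with tall: unwinding the definition, $\II^{\perp \Fin} = \{A \subset \N\colon A \cap B \in \Fin \text{ for all } B \in \II\}$ is precisely the orthogonal of $\II$, so the requirement $\II^{\perp \Fin} = \Fin$ is verbatim the tallness hypothesis. Second, condition (I) of Definition \ref{DEFI:GeneralPideal} with $\II_{0} = \Fin$ is literally the defining property of a P-ideal.

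The one substantial point is condition (II) with $\II_{0} = \Fin$: for every $A \in \II$ there is $B \in \II$ with $B \setminus A \notin \Fin$. By the remark preceding Definition \ref{DEFI:GeneralPideal}, this is equivalent to $\II$ being uncountably generated, so the heart of the matter is to show that a tall P-ideal cannot be countably generated. I would argue by contradiction: suppose $\II$ admits a countable basis, which may be taken increasing as $\{C_{n}\colon n \in \N\}$ (replace the $n$-th basis element by the union of the first $n$). The P-ideal property applied to $(C_{n})_{n\in\N}$ yields $B \in \II$ with $C_{n} \setminus B \in \Fin$ for every $n$. Since $B \in \II$ and $\Fin \subseteq \II$, the complement $\N \setminus B$ must be infinite, for otherwise $\N = B \cup (\N \setminus B) \in \II$, contradicting $\N \notin \II$. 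Now tallness furnishes an infinite set $D \in \II \setminus \Fin$ with $D \subseteq \N \setminus B$. But $D \in \II$ forces $D \subseteq C_{m}$ for some $m$, whence $D = D \setminus B \subseteq C_{m} \setminus B \in \Fin$, contradicting $D \notin \Fin$.

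I expect this final contradiction to be the crux: tallness manufactures an infinite member of $\II$ disjoint from $B$, while the P-ideal property forces every member of $\II$ to be almost contained in $B$, and these two facts are incompatible. Everything else is bookkeeping that reduces the statement to the already-established Proposition \ref{prop-majorated}.
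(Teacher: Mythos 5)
Your proof is correct and follows the same route as the paper, which states this corollary without any written proof as the immediate specialization $\II_{0}=\Fin$ of Proposition \ref{prop-majorated}; your verifications that tall equals $\Fin$-tall and that conditions (I) and (II) of Definition \ref{DEFI:GeneralPideal} hold are exactly the details the paper leaves implicit. One small simplification: your detour through countable generation (and hence through the P-property) is not needed for condition (II), since tallness alone suffices --- given $A\in\II$, the set $\N\setminus A$ is infinite (otherwise $\N = A\cup(\N\setminus A)\in\II$), so tallness yields an infinite $B\in\II$ with $B\subset\N\setminus A$, and then $B\setminus A=B\notin\Fin$.
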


\subsection{Ideals generated by AD families}

Recall that a family of sets $\AA$ is said to be  \emph{almost disjoint} (AD) if the intersection of any pair of its elements is finite. We give now examples of ideals which do not belong to the class of P($\II_{0}$)-ideals for any ideal $\II_{0}$ on $\N$.

\begin{PROP}
Let $\II$ be an ideal on $\N$ generated by an infinite family $\AA$ 
of almost disjoint infinite sets. Then $\II$ is not a 
\emph{P($\II_{0}$)}-ideal for any ideal $\II_{0}$ on $\N$.
\end{PROP}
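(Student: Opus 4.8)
The plan is to fix an arbitrary ideal $\II_0$ and show that one of the two defining conditions of a P($\II_0$)-ideal must fail. Throughout I would use that, since $\II$ is generated by $\AA \cup \Fin$, a set $B$ belongs to $\II$ precisely when $B \setminus (E_{1} \cup \cdots \cup E_{k})$ is finite for some finitely many $E_{1}, \ldots, E_{k} \in \AA$; that is, every element of $\II$ is covered, modulo a finite set, by finitely many members of the almost disjoint family. The key organizing device is a dichotomy on the set $\AA_1 := \{ E \in \AA \colon E \notin \II_0 \}$ of generators that escape $\II_0$.

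First suppose $\AA_1$ is infinite. Here I would violate condition (I) of Definition \ref{DEFI:GeneralPideal}. Choose distinct $E_{1}, E_{2}, \ldots \in \AA_1$ and assume, for contradiction, that some $A \in \II$ satisfies $E_{n} \setminus A \in \II_0$ for every $n$. Writing $A \setminus (E_{i_1} \cup \cdots \cup E_{i_k})$ finite, almost disjointness shows that whenever $E_{n} \notin \{ E_{i_1}, \ldots, E_{i_k} \}$ — which fails for at most $k$ indices — the intersection $E_{n} \cap A$ is finite, so $E_{n} \setminus A$ differs from $E_{n}$ by a finite set. Then $E_{n} \setminus A \in \II_0$ forces $E_{n} \in \II_0$ (using $\Fin \subseteq \II_0$), contradicting $E_{n} \in \AA_1$. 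Hence no witness $A$ exists and (I) fails.

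Now suppose $\AA_1$ is finite (possibly empty). Here I would violate condition (II) by producing a single $A \in \II$ that absorbs every member of $\II$ modulo $\II_0$. Take $A := \bigcup \AA_1$, a finite union of generators, so $A \in \II$. Given any $B \in \II$, cover $B$ modulo a finite set by finitely many $E \in \AA$ and split these into the ones lying in $\AA_1$ (which are contained in $A$ and so contribute nothing to $B \setminus A$) and the ones lying in $\AA \setminus \AA_1 \subseteq \II_0$. Since a finite union of $\II_0$-sets is again in $\II_0$, this yields $B \setminus A \in \II_0$ for every $B \in \II$, so no $B$ can satisfy $B \setminus A \notin \II_0$ and (II) fails.

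The crux is that the statement must hold for \emph{every} ideal $\II_0$ at once, with no assumed relation between $\II_0$ and $\II$, so the genuine work is isolating the correct dichotomy rather than any single computation. I expect the delicate part to be the almost-disjointness bookkeeping in the first case: one must check that the finitely many generators covering a hypothetical witness $A$ are avoided by cofinitely many terms of the chosen sequence, and then pass carefully from ``$E_{n} \setminus A \in \II_0$ and cofinite in $E_{n}$'' to ``$E_{n} \in \II_0$''. The second case is comparatively routine once the dichotomy is set up.
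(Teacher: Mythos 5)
Your proof is correct, and it takes a genuinely different route from the paper's, even though both rest on the same engine: writing an element of $\II$ as a subset of finitely many generators plus a finite set, and using almost disjointness to see that any generator distinct from those finitely many meets it in a finite set, hence (since $\Fin \subseteq \II_{0}$) gets pushed into $\II_{0}$. The paper runs a single argument by contradiction that uses (I) and (II) together: it fixes $A \in \AA$, applies (I) to the increasing unions $A_{n} = A \cup B_{1} \cup \cdots \cup B_{n}$ of distinct generators, obtains a witness $C = C_{1} \cup \cdots \cup C_{k}$ with $C_{i} \in \AA$, asserts that some $A_{n_{0}}$ has $A_{n_{0}} \cap C_{i}$ finite for every $i$, concludes $A_{n_{0}} \in \II_{0}$, and from there that $\AA \subseteq \II_{0}$, hence $\II \subseteq \II_{0}$, contradicting (II). Your dichotomy on $\AA_{1} = \{ E \in \AA \colon E \notin \II_{0} \}$ instead refutes one axiom outright in each case: (I) fails when $\AA_{1}$ is infinite, (II) fails when $\AA_{1}$ is finite. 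This structural difference buys real robustness, because the paper's write-up is loose at exactly the points your organization avoids: its claim that $A_{n_{0}} \cap C_{i}$ is finite tacitly assumes that no $C_{i}$ coincides with $A$ or with one of the $B_{j}$'s making up $A_{n_{0}}$, which cannot be arranged (the witness $C$ supplied by (I) may perfectly well contain $A$); and the jump from ``some $A_{n_{0}} \in \II_{0}$'' to ``$\AA \subseteq \II_{0}$'' is not justified --- what the argument honestly yields is only that all but finitely many of the chosen generators lie in $\II_{0}$, i.e.\ that $\AA_{1}$ is finite. Your Case 2, taking $A = \bigcup \AA_{1}$ and absorbing every $B \in \II$ modulo $\II_{0}$, is precisely the supplementary step needed to convert ``all but finitely many'' into a failure of (II). So the paper's proof is shorter as written, while yours is the more careful argument that actually closes the gap.
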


\begin{proof}
Suppose that $\II$ is a P($\II_{0}$)-ideal for some ideal $\II_{0}$ on $\N$. Fix $A \in \AA$ and take a sequence $(B_{n})_{n \in \N}$ in $\AA \setminus \{ A\}$ whose elements are all different. Now define inductively $A_{0}:=A$ and $A_{n+1}:=A_{n} \cup B_{n}$ for every $n \in \N$. The assumption implies the existence of some $C = C_{1} \cup ... \cup C_{k}$ ($C_{i} \in \AA$ for every $i$) such that $A_{n} \setminus C \in \II_{0}$ for every $n \geq 0$. But there must be some $A_{n_{0}}$ different from the elements $C_{i}$'s. Therefore
\[ A_{n_{0}} \setminus C = A_{n_{0}} \setminus \left((A_{n_{0}} \cap C_{1}) \cup ... \cup (A_{n_{0}} \cap C_{k}) \right) \in \II_{0} \]
and hence $A_{n_{0}} \in \II_{0}$ since $A_{n_{0}} \cap C_{i}$ is finite for each $i=1,\ldots, k$. But this means that $\AA \subset \II_{0}$ and hence $\II \subset \II_{0}$, contradicting Definition \ref{DEFI:GeneralPideal} (II).
\end{proof}

By a result of Mathias \cite{HappyMathias}, ideals $\II$ generated by a maximal almost disjoint (MAD) family of subsets of $\N$ are not analytic, so they do not belong to any of the classes of ideals considered in the previous subsections. However, ideals generated by an AD family $\AA$ are not Baire ideals when the generating family $\AA$ has cardinality $\mathfrak{c}$, as it follows from the next result.

\begin{PROP}
\label{PROP:almostDisjointFamily}
Let $\II$ be an ideal on $\N$. Suppose that there exists a family $\AA \subseteq \II$ with $|\AA| = \mathfrak{c}$ and such that every $B \in \II$ satisfies $ | \{ A \in \AA\colon A \subseteq B \} | < + \infty$. Then $\II$ is not a Baire ideal for $2^{\N}$.
\end{PROP}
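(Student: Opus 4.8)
The plan is to produce a monotonic map $f\colon \II \rto \NWD(2^{\N})$ whose images cover all of $2^{\N}$, thereby witnessing that $\II$ fails the defining condition of Definition \ref{DEFI:baireIdeal}. The whole construction rests on transporting the combinatorial hypothesis into topology through a bijection between $\AA$ and $2^{\N}$, which is available precisely because $|\AA| = \mathfrak{c} = |2^{\N}|$.

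First I would fix a bijection $A \mapsto x_{A}$ from $\AA$ onto $2^{\N}$, so that $2^{\N} = \{ x_{A}\colon A \in \AA \}$ with all the $x_{A}$ distinct, and then define, for each $B \in \II$,
\[ f(B) := \{ x_{A}\colon A \in \AA,\ A \subseteq B \}. \]
Monotonicity is immediate: if $B_{1} \subseteq B_{2}$ then $\{ A \in \AA\colon A \subseteq B_{1} \} \subseteq \{ A \in \AA\colon A \subseteq B_{2} \}$, and applying the bijection gives $f(B_{1}) \subseteq f(B_{2})$.

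The second step is to check that each $f(B)$ belongs to $\NWD(2^{\N})$. By hypothesis the index set $\{ A \in \AA\colon A \subseteq B \}$ is finite, so $f(B)$ is a finite subset of $2^{\N}$; since $2^{\N}$ has no isolated points, every finite subset is closed with empty interior and hence nowhere dense. Thus $f$ is a genuine monotonic function into $\NWD(2^{\N})$. Finally I would verify that the images exhaust the space: given any $x \in 2^{\N}$ there is a unique $A \in \AA$ with $x = x_{A}$, and since $\AA \subseteq \II$ we may take $B = A \in \II$, so that $A \subseteq A$ yields $x = x_{A} \in f(A)$. Hence $\bigcup \{ f(B)\colon B \in \II \} = 2^{\N}$.

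The argument is entirely direct, so there is no serious obstacle; the only point demanding care is the passage from the purely combinatorial finiteness condition to the topological assertion that each $f(B)$ is nowhere dense, which relies solely on $2^{\N}$ having no isolated points. It is worth noting how cleanly the two hypotheses separate: the equality $|\AA| = \mathfrak{c}$ provides the bijection responsible for the surjectivity of the covering, while the finiteness of $\{ A \in \AA\colon A \subseteq B \}$ is exactly what forces each covering set to be nowhere dense.
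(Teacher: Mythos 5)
Your proof is correct, and it takes a more direct route than the paper's. The paper argues through its combinatorial characterization of Baire ideals for $2^{\N}$: it indexes $\AA=\{A_{\sigma}\colon \sigma\in 2^{\N}\}$, defines a tree function $f(s)=\{B\in\II\colon A_{\sigma}\nsubseteq B \mbox{ whenever } \sigma\succcurlyeq s\}$ on $2^{<\N}$, checks that it satisfies hypotheses (i) and (ii) of condition $(3')$ in Corollary \ref{CORO:baireCompact} (the finiteness hypothesis is what allows extending any $s$ to some $t$ with $B\in f(t)$), and then observes that no branch $\sigma$ can satisfy $\bigcup_{k}{f(\sigma|_{k})}=\II$ because $A_{\sigma}\notin f(\sigma|_{k})$ for every $k$. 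You instead exhibit the failure of Definition \ref{DEFI:baireIdeal} directly: after transporting $\AA$ onto $2^{\N}$ by a bijection, the sets $f(B)=\{x_{A}\colon A\in\AA,\ A\subseteq B\}$ are finite, hence nowhere dense in the perfect space $2^{\N}$, and they form a monotone cover of $2^{\N}$. The two constructions are in fact dual to one another: unwinding the paper's tree function through the proof of $(2')\Rightarrow(3')$, where one passes to $F(B)=\{\sigma\in 2^{\N}\colon B\notin f(\sigma|_{k}) \mbox{ for all } k\}$, gives exactly your map, since finiteness of $\{\sigma\colon A_{\sigma}\subseteq B\}$ forces $F(B)=\{\sigma\colon A_{\sigma}\subseteq B\}$. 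Your route buys economy and self-containedness: it needs no appeal to Corollary \ref{CORO:baireCompact}, and the nowhere-density claim reduces to the observation that finite sets are nowhere dense in a space without isolated points. The paper's route keeps the argument inside the tree framework of Section \ref{sec:characterizations}, illustrating how that criterion is applied in practice.
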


\begin{proof}
We will use Corollary \ref{CORO:baireCompact}. Since $\AA$ has cardinality $\mathfrak{c}$ we can index its elements as $\AA = \{ A_{\sigma}\colon \sigma \in 2^{\N} \}$ and define a function $f\colon 2^{<\N} \rto \mathcal{P}(\II)$ as
\[ f(s) = \left\{ B \in \II\colon A_{\sigma} \nsubseteq B \mbox{ whenever } s \preccurlyeq \sigma \in 2^{\N} \right\} \hspace{2mm} \mbox{ for } s \in 2^{< \N}.\]
It is clearly monotonic. Fix any $s \in 2^{< \N}$. Obviously $f(s)$ is hereditary. Given $B \in \II$, since $\{ \sigma \in 2^\N: A_{\sigma} \subset B\}$ is finite, we can find $t \succcurlyeq s$ such that $A_{\sigma} \nsubseteq B$ for every $\sigma \succcurlyeq t$, so that $B \in f(t)$. This shows that $\bigcup_{t \succcurlyeq s}{f(t)} = \II$. Finally, given any $\sigma \in 2^{\N}$ we have that $A_{\sigma} \notin f(\sigma|_{k})$ for every $k \in \N$.
\end{proof}

\section{An uncountably generated Baire ideal for every Polish space}
\label{sec:example}

The aim of this section is to point out that there are models of ZFC in which we can find P-ideals $\II$ with character equal to $\mathfrak{c}$ which are Baire ideals for every Polish space, despite of the fact that they are not Baire ideals for some (non-separable) complete metric space $X_{0}$ (see Proposition \ref{PROP:PidealsNotBideals}).

Let $M[G]$ be a model of set theory obtained by adding $\aleph_2$ many Cohen reals $\{x_\alpha : \alpha<\omega_2\}\subset 2^\mathbb{N}$ to a ground model $M$ where Martin's Axiom and $\mathfrak{c}=\aleph_2$ hold. In the model $M$ there exists an $\omega_2$-chain in $\mathcal{P}(\N)/\Fin$. That is, we have a family $\{C_\alpha\}_{\alpha<\omega_2}$ of subsets of $\mathbb{N}$ such that $C_\alpha \subset^\ast C_\beta$ whenever $\alpha<\beta$; or equivalently, $C_\alpha\setminus C_\beta$ is finite, but $C_\beta\setminus C_\alpha$ is infinite whenever $\alpha<\beta$. We define, in the model $M[G]$,  $$\mathcal{I} = \{a\subset \mathbb{N} : \exists \alpha : a\subset^\ast C_\alpha\}$$

It is clear that $\II$ cannot be generated by less than $\aleph_2$ elements, and $\mathfrak{c}=\aleph_2$ in $M[G]$. We are going to prove that $\II$ satisfies property $(\Box)$ for the case $X = 2^{\N}$. This in particular will imply that $\II$ is a Baire ideal for $2^{\N}$, and by Corollary \ref{CORO:baireCompact} this is equivalent to say that $\II$ is a Baire ideal for every Polish space. 

We will reason by contradiction assuming that we can write $2^\N = \bigcup_{A\in\mathcal{I}} X_A$  in such a way that each $X_A$ is meager and $A\subset B$ implies $X_A\subset X_B$. If we consider $Y_A = \bigcup\{X_{A\cup F} : F \subset \N \text{ is finite}\}$, we still have that each $Y_A$ is meager, $2^\N = \bigcup_{A\in \mathcal{I}} Y_A$ and now $A\subset^\ast B$ implies $Y_A\subset Y_B$. Given the definition of $\mathcal{I}$, this implies that $2^\N = \bigcup_{\alpha<\omega_2}Y_{C_\alpha}$ and $Y_{C_\alpha}\subset Y_{C_\beta}$ if $\alpha<\beta$. 
In particular, there must exist $\alpha$ such that $\{x_\gamma : \gamma<\omega_1\}\subset Y_{C_\alpha}$, and in particular, $\{x_\gamma : \gamma<\omega_1\}$ would be a meager set. However, this contradicts the following general fact about Cohen reals, whose proof we include for the reader's convenience:

\begin{PROP} If $Z\subset \omega_1$ is an uncountable set, then $\{x_\alpha : \alpha\in Z\}$ fails to be nowhere dense in $2^\N$.
\end{PROP}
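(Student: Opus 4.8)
The plan is to argue by contradiction, exploiting that the $x_\alpha$ are mutually Cohen-generic and that closed nowhere dense sets are coded by reals. Suppose $A := \{x_\alpha : \alpha \in Z\}$ were nowhere dense, and let $N := \overline{A}$ be its closure, a closed nowhere dense subset of $2^{\N}$ lying in $M[G]$. Recall that the whole extension is obtained by the ccc forcing $\mathbb{C} = \mathrm{Fn}(\omega_2 \times \N, 2)$ adding the generic $G$, with $x_\alpha(n) = G(\alpha,n)$. Note that $Z$, being an uncountable subset of $\omega_1$, is in particular uncountable, and the $x_\alpha$ are pairwise distinct by a trivial density argument.

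First I would observe that $N$ is coded by a single real (for instance by the subtree $\{s \in 2^{<\N} : U_s \cap N \neq \emptyset\}$ of $2^{<\N}$). Since $\mathbb{C}$ is ccc, a name for this real is decided by countably many countable maximal antichains, so it mentions only conditions whose finite domains involve countably many coordinates $\alpha < \omega_2$. Collecting these into a countable set $S \subseteq \omega_2$ and using the factorization $\mathbb{C} \cong \mathbb{C}_{S} \times \mathbb{C}_{\omega_2 \setminus S}$ together with $M[G] = M[G\upharpoonright S][G \upharpoonright (\omega_2 \setminus S)]$, one gets that the code of $N$, and hence $N$ itself, already belongs to $M[G \upharpoonright S]$.

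Next, because $Z$ is uncountable and $S$ is countable, I would fix some $\gamma \in Z \setminus S$. The point $x_\gamma$ is a coordinate of the generic $G \upharpoonright (\omega_2 \setminus S)$, hence is Cohen-generic over $M[G \upharpoonright S]$. Here the standard genericity lemma applies: for any closed nowhere dense $F$ coded in the ground model, the set $D_F = \{s \in 2^{<\N} : U_s \cap F = \emptyset\}$ is open and dense in $2^{<\N}$ (density being exactly the nowhere density of $F$), so a Cohen real must pass through some $s \in D_F$ and therefore lies outside $F$. Applying this with $F = N$ over $M[G \upharpoonright S]$ yields $x_\gamma \notin N$. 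But $\gamma \in Z$ gives $x_\gamma \in A \subseteq N$, a contradiction; hence $A$ cannot be nowhere dense.

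The only delicate point is the ccc reflection step: one must make precise that the code of the closed set $N$ is captured by countably many Cohen coordinates, so that uncountably many of the reals $x_\alpha$ ($\alpha \in Z$) survive as generics over the model containing $N$. Everything else—the coding of a closed nowhere dense set by a real, the product factorization of Cohen forcing, and the elementary density argument showing that a Cohen real avoids every ground-model nowhere dense set—is routine. (In fact the same reasoning shows that $\{x_\alpha : \alpha \in Z\}$ is a \emph{Luzin set}, but for the present Proposition the nowhere-dense case is all that is needed.)
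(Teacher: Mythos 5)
Your proposal is correct, and it takes a genuinely different route from the paper's proof, even though both exploit the same underlying phenomenon: any nowhere dense set present in $M[G]$ is ``captured'' by countably many Cohen coordinates, so for uncountable $Z$ some $\gamma\in Z$ lives on an untouched coordinate and $x_\gamma$ is generic enough to escape. The paper never forms an intermediate model: it takes the function $t\colon 2^{<\N}\to 2^{<\N}$ witnessing nowhere density, fixes a name $\dot t$ and conditions $p_s$ deciding $\dot t(s)=t(s)$ for each $s\in 2^{<\N}$; since these countably many conditions have finite domains, some $\alpha\in Z$ satisfies $(\N\times\{\alpha\})\cap\dom(p_s)=\emptyset$ for all $s$, and then an explicit amalgamation of a condition $q$ with $p_s$ and with $t(s)$ written on coordinate $\alpha$ yields $q'\Vdash \dot x_\alpha\in[\dot t(s)]$, a density argument that produces the contradiction. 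You instead pass to the closure $N$, code it by the tree $T_N=\{s : U_s\cap N\neq\emptyset\}$, use ccc nice names to find a countable $S$ with the code in $M[G\upharpoonright S]$, factor the forcing as $\mathbb{C}_S\times\mathbb{C}_{\omega_2\setminus S}$, pick $\gamma\in Z\setminus S$, and quote the standard fact that a Cohen real over $M[G\upharpoonright S]$ avoids every closed nowhere dense set coded there. Your version is modular --- each ingredient is a textbook lemma about product Cohen forcing --- and proves more (as you note, the $x_\alpha$ form a Luzin set), whereas the paper's version is self-contained, using only names and the forcing/truth lemma with no factorization or intermediate models, which fits its stated purpose of including the proof ``for the reader's convenience.'' Two points in your sketch deserve explicit care, though neither is a gap: first, the countable set $S$ should be taken in the ground model $M$ (it is, because the nice name lies in $M$), so that the factorization and the product lemma apply over $M$; second, what belongs to $M[G\upharpoonright S]$ is the code $T_N$, not $N$ itself as a set of reals of $M[G]$ --- what you actually use is that nowhere density is an arithmetic property of $T_N$ (``every $s$ has an extension $t\notin T_N$''), hence transfers to $M[G\upharpoonright S]$, making $\{t : U_t\cap N=\emptyset\}$ a dense open subset of $2^{<\N}$ in that model which the generic $x_\gamma$ must meet, whence $x_\gamma\notin N$ in $M[G]$.
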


\begin{proof}
Let $\mathbb{P}$ be the forcing that adds the Cohen reals $\{x_\alpha : \alpha<\omega_2\}$, and let $\{\dot{x}_\alpha : \alpha<\omega_2\}$ be the canonical names of these Cohen reals. Remember, elements $p\in \mathbb{P}$ are $\{0,1\}$-valued functions whose domain is a finite subset of $\mathbb{N}\times\omega_2$, endowed with the natural extension order that $p<q$ iff $\dom{(p)}\supset \dom{(q)}$ and $p|_{\dom{(q)}} = q$.  For each  $s\in 2^{<\mathbb{N}}$ we can consider the clopen set $[s]$ of $2^\mathbb{N}$ consisting of all infinite sequence that end-extend $s$. This describes a basis of open subsets of $2^\mathbb{N}$. Moreover $[t]\subset [s]$ if and only if $t$ is an end-extension of $s$, that we write as $s\prec t$. So, if $\{x_\alpha : \alpha\in Z\}$ is nowhere dense, then there exists a funciton $t:2^{<\mathbb{N}}\rto 2^{<\mathbb{N}}$ such that, for all $s\in2^{<\mathbb{N}}$:
\begin{enumerate}
\item $s\prec t(s)$,
\item $[t(s)]\cap \{x_\alpha : \alpha\in Z\} = \emptyset$
\end{enumerate} 

Let $\dot{t}$ be a name for $t$. For every $s\in 2^{<\mathbb{N}}$ there must exist a condition $p_s\in \mathbb{P}$ such that $p_s\Vdash \dot{t}(s) = t(s)$. Since $2^{<\mathbb{N}}$ is countable and each $p_s$ has finite domain, we can find $\alpha\in Z$ such that $(\mathbb{N}\times \{\alpha\}) \cap \dom{(p_s)} = \emptyset$ for all $s\in 2^{<\mathbb{N}}$. We know, by the definition of the function $t$, that $x_\alpha\not\in [t(s)]$ for any $s\in 2^{<\mathbb{N}}$. However, we are going to prove that for every condition $q$ in the generic filter of $\mathbb{P}$ there exists $q'<q$ such that
$$q'\Vdash \exists s\in 2^{<\mathbb{N}} : \dot{x}_\alpha \in [\dot{t}(s)]$$
and this is a contradiction. So fix $q\in G$. We can assume that the domain of $q$ is of the form $\{0,\ldots,n_0\}\times F_q$ with $\alpha\in F_q$. Consider $s\in 2^{<\mathbb{N}}$ given by $s(n) = q(n,\alpha)$ for $n\leq n_0$. We define the desired condition $q'$ as $q'(n,\gamma) = p_s(n,\gamma)$ if $(n,\gamma)\in \dom{(p_s)}$, and $q'(n,\alpha) = t(s)(n)$ for any $n$ where $t(s)$ is defined. This is possible by the choice we made of $\alpha$. This ensures, on the one hand, that $q'\leq p_s$, so $q'\Vdash \dot{t}(s) = t(s)$. On the other hand, by the way $q'$ is defined on $\mathbb{N}\times \{\alpha\}$, $q'\Vdash \dot{x}_\alpha \in [t(s)]$. Thus $q'\Vdash \dot{x}_\alpha\in [\dot{t}(s)]$ as desired. 
\end{proof}

\section{Coverings with compact subsets}
\label{sec:sompact}

In this section we present an example which shows that an analogue of Baire Theorem for coverings of $\N^{\N}$ by compact subsets can be valid for some uncountably generated analytic ideals. For an arbitrary topological space $X$, we write $\KK(X) := \{ A \subseteq X\colon \text{ $A$ is compact} \}$. Given a partially ordered set $(D,\leq)$, we say that a subset $A \subseteq D$ is \emph{weakly bounded} if each infinite subset of $A$ contains an infinite bounded subset.

\begin{PROP}
Let $f\colon \II \rto \KK(X)$ be a monotonic function where $\II$ is an ideal on $\N$. If $\BB \subset \II$ is a weakly bounded subset then $\bigcup{\{ f(B)\colon B \in \BB\}}$ is countably compact.
\end{PROP}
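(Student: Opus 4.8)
The plan is to use the sequential characterization of countable compactness: a space is countably compact precisely when every sequence in it admits a cluster point (an $\omega$-accumulation point) lying in the space. Writing $Y := \bigcup\{f(B)\colon B \in \BB\}$, I would fix an arbitrary sequence $(y_n)_{n \in \N}$ in $Y$ and, for each $n$, choose $B_n \in \BB$ with $y_n \in f(B_n)$. The goal is to manufacture a cluster point of $(y_n)$ inside $Y$, and the argument splits according to the behaviour of the auxiliary sequence $(B_n)$ in the poset $\BB$.

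First I would dispatch the easy case. If some fixed $B \in \BB$ satisfies $B_n = B$ for infinitely many $n$, then the corresponding subsequence lies entirely in the compact set $f(B)$; since compact sets are countably compact, this subsequence clusters at some $p \in f(B) \subseteq Y$, and $p$ is then a cluster point of $(y_n)$. Otherwise each value is attained only finitely often, so $\{B_n\colon n \in \N\}$ is an infinite subset of $\BB$. This is exactly where weak boundedness enters: it furnishes an infinite subset $\{B_{n_k}\}_{k}$ admitting a bound $B^*$, and monotonicity of $f$ yields $y_{n_k} \in f(B_{n_k}) \subseteq f(B^*)$ for every $k$. Thus the subsequence $(y_{n_k})_{k}$ sits inside the single compact set $f(B^*)$ and, again by countable compactness of compacta, clusters at some point $p \in f(B^*)$.

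The delicate step, and the main obstacle, is to guarantee that $p$ actually belongs to $Y$ and not merely to its closure $\overline{Y}$: a priori $p$ lives in $f(B^*)$, so the argument closes only if $f(B^*)$ is itself one of the compact pieces constituting $Y$, that is, if the bound $B^*$ can be secured inside $\BB$. Securing this is precisely the role of the weak boundedness hypothesis, which I read as delivering the bounding element within the family $\BB$ (equivalently, so that $f(B^*) \subseteq Y$); without it one obtains only relative countable compactness of $Y$ in $X$, as a family of pairwise disjoint sets lying below a common bound already illustrates. Granting $B^* \in \BB$, we get $p \in f(B^*) \subseteq Y$, so every sequence in $Y$ clusters inside $Y$ and $Y$ is countably compact. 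I would finally stress that this dichotomy is exactly why \emph{weak} boundedness, rather than mere infinitude of $\{B_n\}$, is indispensable: an unbounded infinite antichain would scatter the points $y_n$ across compacta with no common compact container, and no surviving cluster point need exist.
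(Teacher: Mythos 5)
Your argument follows the paper's proof almost step for step: pick $B_{n}\in\BB$ with $y_{n}\in f(B_{n})$, split into the case where some value repeats infinitely often and the case where $\{B_{n}\colon n\in\N\}$ is infinite, and in the second case use weak boundedness plus monotonicity to trap a subsequence of $(y_{n})$ inside a single compact set, where it clusters. The divergence is exactly at the step you call delicate. In the paper's definition, $\BB$ is weakly bounded \emph{as a subset of the poset $(\II,\subseteq)$}, so an infinite subfamily counts as bounded as soon as it has an upper bound in $\II$; accordingly, the paper's proof says ``bounded by a set $A\in\II$'' and finishes with a cluster point in $f(A)$, never asking whether that point lies in $Y=\bigcup\{f(B)\colon B\in\BB\}$. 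Your reading --- that the bound $B^{*}$ can be secured inside $\BB$ --- is a strictly stronger hypothesis than the one the paper intends, and it is not what the paper's concluding example provides: there the weakly bounded families are the sets $\II_{p}(c)$, and the bounds of infinite subfamilies are only guaranteed to lie in the ideal $\II_{p}$, not in $\II_{p}(c)$ itself.

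So you have not missed an idea; you have put your finger on a genuine imprecision, and your parenthetical remark about pairwise disjoint sets below a common bound is exactly right. Under the paper's reading the statement is literally false: take $C=\{c_{1}<c_{2}<\cdots\}$ infinite and co-infinite, let $\II$ be the ideal generated by $C$ and $\Fin$, let $\BB=\{\{c\}\colon c\in C\}$ (weakly bounded, since $C\in\II$ bounds every infinite subfamily), let $X=[0,1]$ and let $f(A)$ be the closure of $\{1/n\colon c_{n}\in A\}$; then $f$ is monotonic with compact values, yet $Y=\{1/n\colon n\in\N\}$ is an infinite discrete subspace, hence not countably compact. What the paper's argument actually proves is the relative statement: every sequence in $Y$ has a cluster point in $\bigcup\{f(A)\colon A\in\II\}$. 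That weaker conclusion is all that Corollary \ref{CORO:sigmaCountablyBounded} needs, since in the metric space $\N^{\N}$ it forces $\overline{Y}$ to be compact and $\N^{\N}$ is not a countable union of compact sets. In short, your proof is valid for a strengthened hypothesis (bound inside $\BB$), while the paper's proof is valid for a weakened conclusion (relative countable compactness, equivalently compact closure in complete metric spaces); only the latter repair is compatible with the way the proposition is applied in the final section.
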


\begin{proof}
Let $(x_{n})_{n \in \N}$ be an infinite sequence in $\bigcup{\{ f(B)\colon B \in \BB\}}$. We can choose for each $n \in \N$ an element $B_{n} \in \BB$ so that $x_{n} \in f(B_{n})$. Either if the range of $(B_{n})_{n \in \N}$ is finite or not, we can find by the hypothesis a subsequence $(B_{n_{k}})_{k \in \N}$ bounded by a set $A \in \II$. The corresponding subsequence $(b_{n_{k}})_{k \in \N}$ is contained in $f(A)$ by monotonicity, so it has a cluster point in $f(A)$ which will also be a cluster point of the original sequence $(b_{n})_{n \in \N}$.
\end{proof}

\begin{CORO}\label{CORO:sigmaCountablyBounded}
Let $\II$ be an ideal on $\N$ that can be written as a countable union of weakly bounded subsets. Then, every monotonic function $f\colon \II \rto \KK(\N^{\N})$ satisfy that $\bigcup{\{ f(A)\colon A \in \II \}} \neq \N^{\N}$.
\end{CORO}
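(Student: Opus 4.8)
The plan is to deduce the statement from the preceding Proposition together with the Baire Category Theorem in the Polish space $\N^{\N}$. First I would write $\II = \bigcup_{n \in \N}{\BB_{n}}$, where each $\BB_{n} \subset \II$ is weakly bounded, and set $K_{n} := \bigcup{\{ f(B)\colon B \in \BB_{n}\}}$. By the Proposition each $K_{n}$ is countably compact, and since every $A \in \II$ lies in some $\BB_{n}$ (so that $f(A) \subseteq K_{n}$ by monotonicity), we have the identity $\bigcup{\{ f(A)\colon A \in \II\}} = \bigcup_{n \in \N}{K_{n}}$.

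Next I would upgrade ``countably compact'' to ``compact'' using that $\N^{\N}$ is a complete separable metric space: in a metric space countable compactness coincides with sequential compactness and hence with compactness, so each $K_{n}$ is compact, and being compact in the Hausdorff space $\N^{\N}$ it is closed.

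The key geometric ingredient is that every compact subset $K$ of $\N^{\N}$ is nowhere dense. Indeed, a closed set $K \subseteq \N^{\N}$ is compact if and only if for each coordinate $m$ the projection $\{ \sigma(m)\colon \sigma \in K\}$ is finite; consequently, given any basic clopen set $U_{s}$, one can produce sequences in $U_{s}$ whose value at some coordinate beyond the length of $s$ exceeds every bound attained on $K$, so $U_{s} \nsubseteq K$ and $K$ has empty interior. Thus each $K_{n} \in \NWD(\N^{\N})$. I expect this characterization of compactness in $\N^{\N}$ and the resulting nowhere density to be the only genuinely substantive step; everything else is bookkeeping.

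Finally, $\bigcup_{n \in \N}{K_{n}}$ is a countable union of nowhere dense sets, hence meager, and by the Baire Category Theorem it cannot exhaust the complete metric space $\N^{\N}$. Therefore $\bigcup{\{ f(A)\colon A \in \II\}} \neq \N^{\N}$, as claimed.
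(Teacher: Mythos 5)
Your proposal is correct and follows essentially the same route as the paper: decompose $\II$ into the weakly bounded pieces $\BB_{n}$, apply the Proposition to get (countably) compact sets $K_{n}$, and conclude via the Baire Category Theorem that their countable union cannot be all of $\N^{\N}$. The only difference is that you spell out what the paper dismisses as ``an easy application of Baire Category Theorem,'' namely that compact subsets of $\N^{\N}$ are nowhere dense (via the finite-projection characterization), which is a fine and accurate filling-in of that step.
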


\begin{proof}
Suppose that $\II = \bigcup_{n \in \N}{\BB_{n}}$ where $\BB_{n}$ is a weakly bounded subset of $\II$. Then $\bigcup{\{ f(B)\colon B \in \BB_{n} \}}$ is a compact subset of $\N^{\N}$, and hence $\bigcup{\{ f(A)\colon A \in \II \}}$ is a countable union of compact sets. But $\N^{\N}$ is not a countable union of compact sets as an easy application of Baire Category Theorem.
\end{proof}

We show now an example, taken from \cite[p. 178, Example 1]{Veli}, of an ideal that satisfies the hypothesis of Corollary \ref{CORO:sigmaCountablyBounded}. Consider the map $\varphi\colon \mathcal{P}(\N) \rto [0,+\infty]$ defined by
\[ \varphi(A) = \inf{\{  c > 0\colon |A \cap \{1,\ldots,2^{n}\}| \leq n^{c} \mbox{ for each $n \geq 2$}\}} \]
with the convention $\varphi(A) = +\infty$ if the infimum does not exist. This is a lower-semicontinuous submeasure, so the ideal $\II_{p} = \{ A \subset \N\colon \varphi(A) < + \infty \}$ is an $F_{\sigma}$ ideal (see \cite[p. 7, Lemma 1.2.2]{Farah}). It is easy to check that $\II$ is not countably generated, and it is a countable union of weakly bounded subsets since for each $c > 0$
\[ \II_{p}(c) = \{ A \subset \N\colon \varphi(A) < c \} \]
is weakly bounded, see \cite[p. 178, Example 1]{Veli} for the details.

\section*{Acknowledgements}

We would like to express our gratitude to Christina Brech for her ideas in the construction of the model $M[G]$ in section \ref{sec:example}.




\begin{thebibliography}{10}
\bibitem{SchurFilters} Avilés, A., Cascales, B., Kadets, V. and Leonov, A.:
\newblock \emph{The Schur $\ell^{1}$ theorem for filters}.
Journal of Mathematical Physics, Analysis, Geometry 
(Zh. Mat. Fiz. Anal. Geom.) 3, no. 4, 383-398 (2007).

\bibitem{con-gan-kad} Connor, J., Ganichev, M., and Kadets, V.:
\newblock \emph{A characterization of Banach spaces with separable duals via weak statistical convergence}. Journal of Mathematical Analysis and Applications, 244(1), 251-261, (2000).

\bibitem{Farah} Farah,I.:
\newblock \emph{Analytic quotients: Theory of Liftings for Quotients
over Analytic Ideals on the Integers}. Mem. Am. Math. Soc. 702,
171 p. (2000).

\bibitem{fremlinMartin} Fremlin, D. H.:
\newblock \emph{Consequences of Martin's axiom}. Cambridge University Press (1984)

\bibitem{Gan-kad} Ganichev, M. and Kadets, V.:  
\newblock \emph{Filter Convergence in Banach Spaces and 
generalized Bases} / in Taras Banakh (editor) General Topology in 
Banach Spaces : NOVA Science Publishers, Huntington,
New York; pp. 61 - 69 (2001).

\bibitem{kechris-descriptive} Kechris, A. S.: 
\newblock \emph{Classical descriptive set theory} (Vol. 156). New York: Springer-Verlag (1995).

\bibitem{kelleyTopology} Kelley, J. L.: 
\newblock \emph{General topology}.
Volume 27 of Graduate texts in Mathematics, Springer-Verlag (1955).

\bibitem{kochanekFbases} Kochanek, T.:
\newblock \emph{$\mathcal F$-bases with brackets and with individual brackets in Banach spaces}. Stud. Math. , vol. 211, No. 3, p. 259-268 (2012)

\bibitem{Veli} Louveau, A. and Velickovic, B.:
\newblock \emph{Analytic ideals and cofinal types}.
Ann. Pure Appl. Logic \textbf{99}, No.1-3, 171-195 (1999).

\bibitem{Mazur} Mazur, K.:
\newblock \emph{$F_{\sigma}$ ideals and $\omega_{1} \omega_{1}^{\ast}$-gaps
in the Boolean algebra $\mathcal{P}(\omega) / I$}. Fundam. Math.
\textbf{138}, No.2, 103-111 (1991).

\bibitem{SoleckiAnalytic} Solecki, S.:
\newblock \emph{Analytic ideals and its applications}.
Annals of Pure and Applied Logic \textbf{99}, No.1-3, 51-72
(1999).

\bibitem{HappyMathias} Mathias, A. R.:
\newblock \emph{Happy families}. Annals of Mathematical logic, 12(1), 59-111 (1977).



\end{thebibliography}
\end{document}